\newtheorem{theorem}{Theorem}
\newtheorem{lemma}[theorem]{Lemma}
\newtheorem*{mtheorem}{Main Theorem}
\newtheorem{corollary}[theorem]{Corollary}
\def \PG{\mathrm{PG}}
\def\V{\mathrm{V}}
\def\B{\mathcal{B}}
\def\D{\mathcal{D}}
\def\F{\mathbb{F}}
\def\L{\mathcal{L}}
\title{A small minimal blocking set in $\PG(n,p^t)$, spanning a $(t-1)$-space, is linear}
\author{Peter Sziklai \and  Geertrui Van de Voorde}
\begin{document}

\maketitle

\begin{abstract} 
In this paper, we show that a small minimal blocking set with exponent $e$ in $\PG(n,p^t)$, $p$ prime, spanning a $(t/e-1)$-dimensional space, is an $\F_{p^e}$-linear set, provided that $ p>5(t/e)-11$. As a corollary, we get that all small minimal blocking sets in $\PG(n,p^t)$, $p$ prime, $p>5t-11$, spanning a $(t-1)$-dimensional space, are $\F_p$-linear, hence confirming the linearity conjecture for blocking sets in this particular case.

\end{abstract}
{\bf Keywords:} Blocking set, linearity conjecture, linear set

\section{Introduction}
In this section, we introduce the necessary background and notation. If $V$ is a vector space, then we denote the corresponding projective space by $\PG(V)$. If $V$ has dimension $n+1$ over the finite field $\F_q$, with $q$ elements, $q=p^t$, $p$ prime, then we also write $V$ as $\V(n+1,q)$ and $\PG(V)$ as $\PG(n,q)$. 

A {\em blocking set} in $\PG(n,q)$ is a set $B$ of points such that every hyperplane of $\PG(n,q)$ contains at least one point of $B$. Such a blocking set is sometimes called a {\em $1$-blocking set}, or a {\em blocking set with respect to hyperplanes}. A blocking set $B$ is called {\em small} if $|B|<3(q+1)/2$ and {\em minimal} if no proper subset of $B$ is a blocking set. 

A point set $S$ in $\PG(V)$, where $V=\V(n+1,p^t)$, is called  $\F_{q_0}${\em-linear} if there exists a subset $U$ of $V$ that forms an $\F_{q_0}$-vector space for some $\F_{q_0} \subset {\mathbb{F}}_{p^t}$, such that $S=\B(U)$, where 
$$\B(U):=\{\langle u \rangle_{\mathbb{F}_{p^t}}~:~u \in U\setminus \{0\}\}.$$

We have a one-to-one correspondence between the points of $\PG(n,q_0^h)$ and the elements of a Desarguesian $(h-1)$-spread $\D$ of $\PG(h(n+1)-1,q_0)$. This gives us a different view on linear sets; namely, an $\F_{q_0}$-linear set is a set $S$ of points of $\PG(n,q_0^h)$ for which there exists a subspace $\pi$ in $\PG(h(n+1)-1,q_0)$ such that the points of $S$ correspond to the elements of $\D$ that have a non-empty intersection with $\pi$. We identify the elements of $\D$ with the points of $\PG(n,q_0^h)$, so we can view $\B(\pi)$ as a subset of $\D$, i.e.
$$\B(\pi)=\{R\in \D|R\cap \pi\neq \emptyset\}.$$

For more information on this approach to linear sets, we refer to \cite{linearsets}. 

The {\em linearity conjecture} for blocking sets (see \cite{sziklai}) states that
\begin{itemize}
\item[(LC)] All small minimal blocking sets in $\PG(n,q)$ are linear sets.
\end{itemize}

Up to our knowledge, this is the complete list of cases in which the linearity conjecture for blocking sets in $\PG(n,p^t)$, $p$ prime, with respect to hyperplanes, has been proven.

\begin{itemize}
\item $t=1$ (for $n=2$, see \cite{blok}; for $n>2$, see \cite{heim})
\item $t=2$ (for $n=2$, see \cite{TS:97}; for $n>2$, see \cite{Storme-Weiner})
\item $t=3$ (for $n=2$, see \cite{pol}; for $n>2$, see \cite{Storme-Weiner})
\item $B$ is of R\'edei-type, i.e., there is a hyperplane meeting $B$ in $\vert B \vert-p^t$ points  (for $n=2$, see \cite{simeon,redei2}; for $n>2$, see \cite{redei})
\item dim$\langle B \rangle=t$ (see \cite{sz}).
\end{itemize}

In this paper, we show that if dim$\langle B \rangle=t-1$,  and the characteristic of the field is sufficiently large, $B$ is a linear set, as a corollary of the main theorem.

\begin{mtheorem}A small minimal blocking set $B$ in $\PG(n,q)$, with exponent $e$, $q=p^t$, $p$ prime, $q_0:=p^e$, $q_0\geq 7$, $t/e=h$, spanning an $(h-1)$-dimensional space is an $\F_{q_0}$-linear set.
\end{mtheorem}

\section{The intersection of a small minimal blocking set and a subspace}
A subspace clearly meets an $\F_p$-linear set in $1$ mod $p$ or $0$ points. The following theorem shows that for a small minimal blocking set, the same holds.
\begin{theorem}\label{1modp}
{\rm \cite[Theorem 2.7]{sz}} If $B$ is a small minimal blocking set in $\PG(n,p^t)$, $p$ prime, then $B$ intersects every subspace of $\PG(n,p^t)$ in $1$ mod $p$ or $0$ points.
\end{theorem}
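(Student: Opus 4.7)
The plan is to prove the result by induction on $k=\dim\pi$, with the base case $k=1$ being the classical theorem of Sz\H{o}nyi for small minimal blocking sets on lines. For a line $\ell$ meeting $B$, one may restrict to a plane through $\ell$ to obtain a small minimal blocking set in $\PG(2,q)$, and then project from a point $P\in B\cap\ell$ onto a disjoint line. The resulting affine set has size less than $3q/2$ and determines few directions, so the R\'edei polynomial argument (lacunary polynomial techniques of Blokhuis--Sz\H{o}nyi) forces every secant line through $P$, and in particular $\ell$, to meet $B$ in $1\pmod p$ points.

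For the inductive step with $k\geq 2$ and $B\cap\pi\neq\emptyset$, I would fix a $(k-2)$-subspace $\sigma\subset\pi$ and use that the $q+1$ hyperplanes of $\pi$ through $\sigma$ partition $\pi\setminus\sigma$, giving
\[
|B\cap\pi|-|B\cap\sigma| = \sum_{\substack{H\text{ hyperplane of }\pi\\H\supset\sigma}}\bigl(|B\cap H|-|B\cap\sigma|\bigr).
\]
Since $B$ is small, a generic choice of $\sigma$ makes $|B\cap\sigma|=0$. By induction each term $|B\cap H|$ lies in $\{0,1\}\pmod p$, so $|B\cap\pi|$ is congruent modulo $p$ to the number $N$ of secant hyperplanes in the pencil. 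It remains to show $N\equiv 0$ or $1\pmod p$.

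The last step is the main obstacle: controlling the distribution of $|B\cap H|$ across the pencil of hyperplanes through $\sigma$. The standard tool is the Sz\H{o}nyi--Weiner stability lemma applied to the R\'edei-type polynomial encoding $B$ relative to $\sigma$, which says that the number of hyperplanes in a pencil meeting a small minimal blocking set in a prescribed residue class mod $p$ is itself tightly constrained. Together with the bound $|B|<3(q+1)/2$, this forces $N\in\{0,1\}\pmod p$ and completes the induction. A technical point is verifying that the ``generic'' $\sigma$ with $B\cap\sigma=\emptyset$ always exists when $B\cap\pi$ is nonempty and $k\ge 2$; this follows from a dimension count using that $|B|$ is much smaller than the number of $(k-2)$-subspaces of $\pi$ available, but the case $|B\cap\pi|=|B|$ (when $B$ itself lies in $\pi$) has to be handled separately by reversing the roles of $\pi$ and the ambient space.
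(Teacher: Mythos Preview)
The paper does not prove Theorem~\ref{1modp}; it is quoted from \cite[Theorem~2.7]{sz} and used as a black box throughout. So there is no proof in the paper to compare your proposal against, only the original Sz\H{o}nyi--Weiner argument.

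Your base case has a genuine error. You claim that for a line $\ell$ meeting $B$ one may ``restrict to a plane through $\ell$ to obtain a small minimal blocking set in $\PG(2,q)$''. This is false: $B\cap\Pi$ is in general not a blocking set of the plane $\Pi$ at all (e.g.\ take $B$ a line in $\PG(3,q)$ and $\Pi$ any plane meeting $B$ in a single point). The lacunary-polynomial machinery of Blokhuis and Sz\H{o}nyi needs a genuine planar blocking set as input, so your argument never gets off the ground. The actual reduction in \cite{sz} is by \emph{projection}, not restriction: one projects $B$ from a point $Q\notin B$ lying only on tangent lines onto a hyperplane containing $\ell$, obtaining (by what is quoted here as Lemma~\ref{projection}) a small minimal blocking set in $\PG(n-1,q)$ with the same intersection with $\ell$, and iterates down to $n=2$ where Sz\H{o}nyi's planar theorem applies.

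Your inductive step is also more involved than necessary, and the key assertion that $N\equiv 0$ or $1\pmod p$ is never justified --- you yourself call it ``the main obstacle'' and only gesture at a stability lemma. In fact, once the line case is known the subspace case is immediate with no pencil argument: if $B\cap\pi\neq\emptyset$, pick $P\in B\cap\pi$ and write
\[
|B\cap\pi|=1+\sum_{\substack{L\subset\pi\\ P\in L}}\bigl(|B\cap L|-1\bigr),
\]
where each summand is $\equiv 0\pmod p$ by the line case. Choosing $\sigma$ disjoint from $B$, as you do, discards exactly the anchor point that makes the count trivial.
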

From this theorem, we get that every small minimal blocking set $B$ in $\PG(n,p^t)$, $p$ prime, has an {\em exponent e} $\geq 1$, which is the largest integer for which every hyperplane intersects $B$ in $1$ mod $p^e$ points.

\subsection{The intersection with a line}
The following theorem by Sziklai characterises the intersection of particular lines with a small minimal blocking set as a linear set. \begin{theorem}
{\rm \cite[Corollary 5.2]{sziklai}} \label{sublines}Let $B$ be a small minimal blocking set with exponent $e$ in $\PG(n,q)$, $q=p^t$, $p$ prime. If for a certain line $L$, $\vert L\cap B \vert=p^e+1$, then $\F_{p^e}$ is a subfield of $\F_q$ and $L\cap B$ is a subline $\PG(1,p^e)$.
\end{theorem}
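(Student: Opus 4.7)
My plan is to exploit the R\'edei polynomial method. First, I would reduce to a planar problem: projecting $B$ from a generic $(n-3)$-dimensional subspace disjoint from $L$ onto a plane containing $L$ yields a small minimal blocking set of $\PG(2,q)$ whose intersection with $L$ is unchanged, so it suffices to treat the case $n=2$.

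In the plane, choose affine coordinates so that one of the $p^e+1$ points of $L\cap B$ lies at infinity on $L$, and the remaining $p^e$ points form a subset $T\subset \F_q$. Writing the affine part of $B$ as $\{(a_i,b_i)\}$, form the R\'edei polynomial
\[
R(X,Y)=\prod_i (X+a_iY-b_i)\in \F_q[X,Y].
\]
For fixed $Y_0\in \F_q$, the multiplicities of the linear factors of $\gcd(R(X,Y_0),X^q-X)$ record how many points of $B$ lie on each line through the point at infinity of slope $-Y_0$. Theorem \ref{1modp} forces each such intersection number to be $1 \bmod p$, and the hypothesis that $e$ is the exponent of $B$ promotes this to $1\bmod p^e$ on suitable pencils, giving strong divisibility properties of $R(X,Y)$ modulo $X^q-X$.

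The crux is the specialization at the slope corresponding to $L$. Since $|L\cap B|=p^e+1$, the polynomial
\[
f(X):=\prod_{t\in T}(X-t)
\]
has degree exactly $p^e$, and must divide a fully reducible lacunary factor of $R$ arising from the $1\bmod p^e$ condition. Combined with the smallness $|B|<3(q+1)/2$, the minimality of $B$, and the maximality built into the definition of the exponent, this forces $f$ to be a $p^e$-linearized polynomial over $\F_q$. The classical theory of such $p$-polynomials then yields simultaneously that $\F_{p^e}$ is a subfield of $\F_q$ (i.e.\ $e\mid t$) and that $T$ is an $\F_{p^e}$-subspace of $\F_q$, so $L\cap B$ is the desired $\PG(1,p^e)$-subline.

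The principal obstacle is the last step: converting a $1\bmod p^e$ divisibility condition on intersection counts into the rigid structural conclusion that $f$ is $p^e$-linearized. This is where one needs the interplay of the exponent, the smallness of $|B|$, and the minimality simultaneously: without minimality one could have redundant points inflating multiplicities in $R$, and without the bound $|B|<3(q+1)/2$ the total degree of $R$ would be too large to pin down the linearized shape of $f$ from its specialization data.
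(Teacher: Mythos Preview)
The paper contains no proof of this statement: it is quoted verbatim as \cite[Corollary 5.2]{sziklai} and used as a black box. So there is nothing in the present paper to compare your argument against; any comparison would have to be with Sziklai's original proof in the cited reference, which does indeed proceed via the R\'edei polynomial and the algebraic curves it defines.

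That said, your outline is a plan rather than a proof, and you have correctly located the gap yourself. The passage ``this forces $f$ to be a $p^e$-linearized polynomial'' is exactly the substance of the result, and nothing you have written explains why the $1\bmod p^e$ intersection property plus smallness pins $f(X)=\prod_{t\in T}(X-t)$ down to the specific shape $X^{p^e}+\alpha X+\beta$ (note: not an arbitrary $p^e$-linearized polynomial, but one with only the top and bottom terms, up to an additive constant; that is what encodes a single $\F_{p^e}$-coset). In Sziklai's argument this step is not a formal consequence of divisibility alone; it comes from analysing the low-degree components of the curve associated with the R\'edei polynomial and invoking a Weil-type bound to rule out spurious factors. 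Without supplying that mechanism, your write-up remains a heuristic.

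Two smaller points. First, in the reduction step you must choose the centre of projection so that no point of $B\setminus L$ is sent onto $L$; otherwise $|L\cap \tilde B|$ could exceed $p^e+1$. This is easy to arrange (a counting argument on the span of $B\setminus L$ and $L$), but it should be said. Second, even granting that $f$ is additive of degree $p^e$, the conclusion that $e\mid t$ requires an extra word: a fully split additive polynomial $X^{p^e}+\alpha X$ over $\F_q$ has its root set equal to a one-dimensional $\F_{p^e}$-subspace only when $\F_{p^e}\subset\F_q$, and this is where $e\mid t$ enters.
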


Using the $1$ mod $p$-result (Theorem \ref{1modp}), it is not too hard to derive an upper bound on the size of a small minimal blocking set in $\PG(n,q)$ as done in \cite{reduction}. This bound is a weaker version of the bound in Corollary 5.2 of \cite{sziklai}.
\begin{lemma}{\rm \cite[Lemma 1]{reduction}} \label{size}
The size of a small minimal blocking set $B$ with exponent $e$ in $\PG(n,q_0^h)$, $q_0:=p^e\geq 7$, $p$ prime, is at most $q_0^{h}+q_0^{h-1}+q_0^{h-2}+3q_0^{h-3}$.
\end{lemma}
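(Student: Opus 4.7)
The plan is to derive the bound by double counting incidences between points of $B$ and hyperplanes, using the fact---immediate from the definition of the exponent $e$---that every hyperplane $H$ of $\PG(n,q_0^h)$ meets $B$ in $1 \bmod q_0$ points. Combined with the blocking condition $|H\cap B|\geq 1$, this forces $|H\cap B|=1$ or $|H\cap B|\geq q_0+1$, so in either case
\begin{equation*}
|H\cap B|\bigl(|H\cap B|-1\bigr) \geq (q_0+1)\bigl(|H\cap B|-1\bigr).
\end{equation*}

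Set $b:=|B|$, $q:=q_0^h$ and $\theta_k:=(q^{k+1}-1)/(q-1)$. The standard incidence counts $\sum_H|H\cap B|=b\,\theta_{n-1}$ and $\sum_H|H\cap B|(|H\cap B|-1)=b(b-1)\theta_{n-2}$ (obtained by counting point--hyperplane and ordered pair--hyperplane incidences respectively) collapse the summed pointwise inequality above into the single quadratic condition
\begin{equation*}
b(b-1)\theta_{n-2} \geq (q_0+1)\bigl(b\,\theta_{n-1}-\theta_n\bigr).
\end{equation*}
Since $b<3(q+1)/2$ by hypothesis and the large root of this quadratic is of order $q_0\,q$, $b$ must lie below the smaller root.

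To finish I would substitute the explicit polynomial formulas for $\theta_{n-2},\theta_{n-1},\theta_n$ in terms of $q=q_0^h$, and verify directly that plugging $b_\ast:=q_0^h+q_0^{h-1}+q_0^{h-2}+3q_0^{h-3}$ into the left-hand side leaves a non-positive value whenever $q_0\geq 7$; this forces $b\leq b_\ast$. A brief asymptotic analysis shows the smaller root grows like $q+q/(q_0-1)+O(1)$, and the elementary inequality $1/(q_0-1)<1/q_0+1/q_0^2+3/q_0^3$ (valid for every $q_0\geq 2$) then shows that $b_\ast$ comfortably dominates this leading expression.

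The main obstacle is the bookkeeping in this final substitution: one must track constants carefully and confirm that the smaller root really fits under $b_\ast$ uniformly in $n\geq 2$ and $h\geq 1$, not merely to leading order. The threshold $q_0\geq 7$ plays two roles here --- it keeps the discriminant of the quadratic (whose leading behaviour is $q_0(q_0-4)q^2$) positive, so that a meaningful upper bound exists, and it provides enough slack for the coefficient $3$ in front of $q_0^{h-3}$ to absorb the $O(q^{2-n})$ corrections coming from the ratios $\theta_{n-1}/\theta_{n-2}$ and $\theta_n/\theta_{n-2}$ at general $n$.
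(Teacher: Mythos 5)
The paper itself does not prove this lemma; it simply quotes it from \cite[Lemma 1]{reduction} (and the reference derives it from the planar Sz\H{o}nyi--Sziklai bounds plus projection). So yours is a from-scratch argument, and its skeleton is sound: by the definition of the exponent every hyperplane $H$ satisfies $|H\cap B|\equiv 1 \bmod q_0$, hence $(|H\cap B|-1)(|H\cap B|-q_0-1)\geq 0$, and the two standard incidence counts turn this into $b(b-1)\theta_{n-2}\geq (q_0+1)(b\theta_{n-1}-\theta_n)$; since $b<3(q+1)/2$ is well below the larger root, $b$ is at most the smaller root, and it remains to check that the quadratic is non-positive at $b_*=q_0^h+q_0^{h-1}+q_0^{h-2}+3q_0^{h-3}$.

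The genuine gap is that this last check --- which you yourself flag as ``the main obstacle'' --- is precisely where all the work lies, and your heuristic for it is not adequate. The claim that the smaller root is $q+q/(q_0-1)+O(1)$ and that $1/(q_0-1)<1/q_0+1/q_0^2+3/q_0^3$ makes $b_*$ ``comfortably'' larger is misleading about the margins: for $h=2$ the bound is attained by Baer subplanes (every hyperplane inequality is then an equality, so $b_*$ sits essentially at the smaller root and there is no slack at all), and for $h=3$, $q_0=7$, $n=2$ the smaller root is roughly $401.4$ against $b_*=402$, a margin below $1$. The lower-order corrections to the root (of size up to order $q_0$ before cancellation) are comparable to the slack $3q_0^{h-3}-q_0^{h-2}/(q_0-1)$ exactly in these small-$h$ cases, so only the exact substitution settles the matter; incidentally the leading term of the discriminant is $(q_0+1)(q_0-3)q^2$, not $q_0(q_0-4)q^2$, and the real role of $q_0\geq 7$ is that the coefficient $-1/q_0^2+5/q_0^3$ appearing after substituting $b_*$ is negative only for $q_0>5$. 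Finally, the uniformity in $n$ is not a difficulty you need to ``absorb'': dividing by $\theta_{n-2}$, the quadratic for general $n$ differs from the planar one by $(q_0+1)(q-1)\bigl((q+1)-b\bigr)/(q^{n-1}-1)$, which is negative for $b>q+1$, so $n=2$ is the worst case (alternatively, project to a plane via Lemma \ref{projection}). With the planar case verified exactly for all $h$ and $q_0\geq 7$, your route does go through, but as written the proof stops short of that verification.
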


In this paper, we will make use of the fact that we can find lower bounds on the number of secant lines to a small minimal blocking set. In the next lemma, one considers the number of $(q_0+1)$-secants to the blocking set $B$, which will give a linear intersection with the blocking set by Theorem \ref{sublines}.
\begin{lemma}{\rm \cite[Lemma 4]{reduction}} \label{lemma1} A point of a small minimal blocking set $B$ with exponent $e$ in $\PG(n,q_0^h)$, $q_0:=p^e\geq 7$, $p$ prime, lying on a $(q_0+1)$-secant, lies on at least $q_0^{h-1}-4q_0^{h-2}+1$ $(q_0+1)$-secants.
\end{lemma}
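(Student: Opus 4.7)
The plan is a double-counting argument on lines through $P$, combined with matching upper and lower bounds on $|B|$. Since $B$ has exponent $e$, every non-empty subspace meets $B$ in $1\pmod{q_0}$ points; in particular every line through $P$ meets $B$ in $iq_0+1$ points for some $i\geq 0$. Let $s_i$ denote the number of lines through $P$ meeting $B$ in exactly $iq_0+1$ points (so $s_1$ is the quantity to bound). The partition of $B\setminus\{P\}$ by lines through $P$ yields the fundamental identity
\[
\sum_{i\geq 1} iq_0\,s_i \;=\; |B|-1,\qquad\text{equivalently}\qquad s_1 \;=\; \tfrac{|B|-1}{q_0}-\sum_{i\geq 2}i\,s_i.
\]

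To control $(|B|-1)/q_0$ tightly, I would use that every line of $\PG(n,q_0^h)$ meets every hyperplane and is therefore itself a blocking set, giving $|B|\geq q_0^h+1$. Combined with Lemma~\ref{size}, this yields
\[
q_0^{h-1}\;\leq\;\tfrac{|B|-1}{q_0}\;\leq\;q_0^{h-1}+q_0^{h-2}+q_0^{h-3}+3q_0^{h-4}.
\]
Hence the claim $s_1\geq q_0^{h-1}-4q_0^{h-2}+1$ is equivalent to the upper estimate $\sum_{i\geq 2} i\,s_i \leq \tfrac{|B|-1}{q_0}-q_0^{h-1}+4q_0^{h-2}-1$ on the ``long contribution'' through $P$.

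The main obstacle is precisely this last estimate: the direct consequence $2q_0\sum_{i\geq 2}s_i\leq |B|-1-q_0 s_1$ of the fundamental identity is self-referential and hence too weak. To break the circularity, I would exploit the given $(q_0+1)$-secant $\ell_0$ through $P$ by analyzing the planes $\pi$ containing $\ell_0$. Writing $|\pi\cap B|=j(\pi)q_0+1$ (valid by the $1\pmod{q_0}$ property), and using that each point of $B\setminus\ell_0$ lies in a unique plane through $\ell_0$, summing gives $\sum_{\pi}(j(\pi)-1)=(|B|-q_0-1)/q_0$. Every long line through $P$ distinct from $\ell_0$ sits in a unique such plane and contributes at least $2$ to the corresponding $j(\pi)-1$. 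Combining this with the upper size bound (which forces most planes $\pi\supset\ell_0$ to satisfy $j(\pi)=1$) and the $\PG(1,q_0)$-subline structure of $\ell_0\cap B$ guaranteed by Theorem~\ref{sublines} produces the required bound on $\sum_{i\geq 2} i\,s_i$. Substituting back into the fundamental identity yields $s_1\geq q_0^{h-1}-4q_0^{h-2}+1$, as claimed.
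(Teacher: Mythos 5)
Your reduction of the claim to an upper bound on $\sum_{i\geq 2} i\,s_i$ is correct, but the step that is supposed to deliver that bound is not actually there, and this is precisely the hard core of the lemma. The decomposition by planes through $\ell_0$ carries no new information: the planes through $\ell_0$ partition $B\setminus\ell_0$, and inside each such plane the lines through $P$ partition its points of $B$ off $\ell_0$, so the identity $\sum_{\pi}(j(\pi)-1)=(|B|-q_0-1)/q_0$ is just a regrouping of your fundamental identity; the observation that each long line through $P$ contributes at least $2$ to its plane's $j(\pi)-1$ yields only $(|B|-q_0-1)/q_0\geq (s_1-1)+2\sum_{i\geq 2}s_i$, which is exactly the ``self-referential'' inequality you had already discarded as too weak. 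Likewise, ``the size bound forces most planes through $\ell_0$ to have $j(\pi)=1$'' controls the number of rich planes, not the weighted sum $\sum_{i\geq2} i\,s_i$: a single secant through $P$ may carry on the order of $q_0^{h-1}$ points of $B$ (a R\'edei-type line, for instance), so the long secants through $P$ can be few but heavy and lie in very few planes through $\ell_0$. Concretely, nothing in your sketch excludes the a priori scenario in which $P$ lies on one $(q_0+1)$-secant and roughly $q_0^{h-2}$ secants of size about $q_0^2+1$ (the secant distribution of a ``weight two'' point of a linear set, plus one subline): this is compatible with the $1 \bmod q_0$ property, with $q_0^h+1\leq|B|\leq q_0^h+q_0^{h-1}+q_0^{h-2}+3q_0^{h-3}$ and with Theorem~\ref{sublines}, yet it has $s_1=1$. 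Ruling it out is the content of the lemma and cannot be done by the counting you set up.

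What is missing is a genuine lower bound on the total number $N$ of secants to $B$ through $P$, and this is where a non-elementary (polynomial-method) input enters; note that the paper itself does not prove this lemma but quotes it from \cite{reduction}. The standard route, parallel to the proof of Lemma~\ref{lemma3} in this paper, is: since $P$ lies on a $(q_0+1)$-secant, its point exponent is $e_P=e$; if $n>2$, project repeatedly from a point not in $B$ lying only on tangents onto a hyperplane containing the given $(q_0+1)$-secant (Lemma~\ref{projection}), which preserves $e_P=e$, does not increase the size, and does not increase the number of secants through $P$; in the plane apply Lemma~\ref{numbersecants} (Blokhuis--Lov\'asz--Storme--Sz\H{o}nyi) to get $N\geq (q_0^h-\kappa+1)/q_0+1$, where $|B|=q_0^h+\kappa$. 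Then your identity finishes the job: every secant contributes at least $q_0$, and every non-$(q_0+1)$-secant at least $2q_0$, points to $|B|-1$, so $s_1\geq 2N-(|B|-1)/q_0\geq q_0^{h-1}-3\kappa/q_0+2\geq q_0^{h-1}-4q_0^{h-2}+1$, using Lemma~\ref{size} and $q_0\geq 7$. Without Lemma~\ref{numbersecants} or an equivalent Blokhuis/Sz\H{o}nyi-type result, your argument cannot close.
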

For the proof of Lemma \ref{lemma3}, we will make use of the concept of point exponents of a blocking set and the well-known fact that the projection of a small minimal blocking set is a small minimal blocking set.

\begin{lemma}{\rm\cite[Corollary 3.2]{sz}} \label{projection} Let $n\geq 3$. The projection of a small minimal blocking set in $\PG(n,q)$, from a point $Q\notin B$ onto a hyperplane skew to $Q$, is a small minimal blocking set in $\PG(n-1,q)$.
\end{lemma}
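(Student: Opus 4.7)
The plan is to let $\pi_Q:\PG(n,q)\setminus\{Q\}\to H$ denote the linear projection from $Q$ onto $H$, and to set $B':=\pi_Q(B)$. One needs to verify three things: that $B'$ blocks, that it is small, and that it is minimal. For blocking, given a hyperplane $H'$ of $H$, the join $\widetilde H:=\langle Q,H'\rangle$ is a hyperplane of $\PG(n,q)$, so meets $B$ at some $R$; then $\pi_Q(R)\in\widetilde H\cap H=H'$ lies in $B'\cap H'$. Smallness is immediate: $|B'|\leq |B|<3(q+1)/2$, since $B'$ is a set-theoretic image of $B$ in a projective space over the same field $\F_q$.

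For minimality I would proceed by contradiction: suppose some $P'\in B'$ is inessential, so $B'\setminus\{P'\}$ still blocks $H$. Writing $\ell:=\langle Q,P'\rangle$ and $B\cap\ell=\{P_1,\ldots,P_k\}$, Theorem~\ref{1modp} applied to $\ell$ forces $k\equiv 1\pmod p$, so in particular $k\geq 1$. The goal is to contradict minimality of $B$ by producing an index $j$ for which $B\setminus\{P_j\}$ still blocks $\PG(n,q)$. Using the identity $B'\cap H'=\pi_Q(B\cap\langle Q,H'\rangle)$, any hyperplane $\widetilde H$ of $\PG(n,q)$ through $Q$ is automatically blocked by $B\setminus\{P_1,\ldots,P_k\}$: the assumption on $B'\setminus\{P'\}$ produces a point $R'\in(B'\setminus\{P'\})\cap(\widetilde H\cap H)$, which lifts to a point of $B$ on $\widetilde H$ lying off $\ell$.

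The main obstacle is controlling hyperplanes $\widetilde H$ of $\PG(n,q)$ not through $Q$. Such a $\widetilde H$ meets $\ell$ in exactly one point $R$; if $R\notin B$, then $B\cap\widetilde H$ lies off $\ell$ and removing any $P_j$ leaves $\widetilde H$ blocked, while the problematic case is when $R=P_j$ and $\widetilde H$ is tangent to $B$ at $P_j$ alone while avoiding $Q$. The technical heart is to exhibit an index $j$ for which no such \emph{stray} tangent hyperplane exists, and my plan is to combine the 1 mod $p$ structure from Theorem~\ref{1modp}, applied to subspaces through $\ell$, with the smallness bound $|B|<3(q+1)/2$ so as to bound, for each $j$, the number of hyperplanes tangent to $B$ at $P_j$ only and avoiding $Q$; then by counting one argues that at least one $j$ admits no such tangent. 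This final counting step, which depends essentially on $B$ being small, is the delicate part of the argument.
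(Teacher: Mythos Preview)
The paper does not prove this lemma; it is quoted from Sz\H{o}nyi--Weiner \cite{sz} without argument, so there is no in-paper proof to compare against. I will therefore assess your attempt on its own merits. Your treatment of the blocking property and of smallness is fine; the problem is the minimality step.

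Your plan is to locate an index $j$ for which $P_j$ has no tangent hyperplane avoiding $Q$. But under your standing hypothesis that $P'$ is inessential in $B'$, such a $j$ can \emph{never} exist. By minimality of $B$, each $P_j$ lies on at least one tangent hyperplane $T_j$ of $\PG(n,q)$. If $k\geq 2$ and $Q\in T_j$, then $T_j\supseteq\ell$ and hence $T_j$ contains $P_1,\ldots,P_k$, contradicting tangency. If $k=1$ and $Q\in T_1$, then $T_1\supseteq\ell$ and $T_1\cap H$ is a hyperplane of $H$ through $P'$; your assumption that $B'\setminus\{P'\}$ still blocks produces a point of $B'\setminus\{P'\}$ in $T_1\cap H$, which lifts to a point of $B\setminus\ell$ inside $T_1$, again contradicting tangency. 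In every case the tangent $T_j$ necessarily avoids $Q$, i.e.\ every $P_j$ carries a ``stray tangent''. No counting can therefore yield an index $j$ with zero stray tangents; the scheme you outline cannot close.

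The argument that does work runs in the opposite direction: show directly that each $P'\in B'$ is essential by exhibiting a hyperplane of $\PG(n,q)$ through $\ell=QP'$ meeting $B$ only on $\ell$ (its trace on $H$ is then tangent to $B'$ at $P'$). By Theorem~\ref{1modp}, both $\ell$ and every hyperplane through $\ell$ meet $B$ in $1\pmod p$ points, so each hyperplane through $\ell$ meets $B\setminus\ell$ in a multiple of $p$ points. If all $\theta_{n-2}=(q^{n-1}-1)/(q-1)$ such hyperplanes met $B\setminus\ell$, each would contribute at least $p$, and since every point of $B\setminus\ell$ lies in exactly $\theta_{n-3}$ of them one gets $|B\setminus\ell|\geq p\,\theta_{n-2}/\theta_{n-3}>pq$, contradicting $|B|<3(q+1)/2$. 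Hence the desired hyperplane exists. This uses precisely the two ingredients you named (the $1\bmod p$ structure and smallness), but applied to hyperplanes through $\ell$ rather than to tangents at the $P_j$.
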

The exponent $e_P$ of a point $P$ of a small minimal blocking set $B$ is the largest number for which every line through $P$ meets $B$ in $1$ mod $p^{e_P}$ or $0$ points.
The following lemma is essentially due to Blokhuis.

\begin{lemma} {\rm (See \cite[Lemma 2.4(1)]{aart})} \label{numbersecants}If $B$ is a small minimal blocking set in $\PG(2,q)$, $q=p^t$, $p$ prime, with $|B|=q+\kappa$, and $P$ is a point with exponent $e_P$, then the number of secants to $B$ through $P$, is at least $$(q-\kappa+1)/p^{e_P}+1.$$
\end{lemma}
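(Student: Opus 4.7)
The plan is to combine the $p^{e_P}$-divisibility at $P$ with the blocking structure of $B$ via a counting-and-polynomial argument. We may assume $B$ is non-trivial (not a line), since the claimed bound fails when $B$ is a line. Let $\mathcal{S}(P)$ be the set of secants through $P$ and $s=|\mathcal{S}(P)|$. Each $L\in\mathcal{S}(P)$ satisfies $|L\cap B|-1\geq p^{e_P}$, since $|L\cap B|\equiv 1\pmod{p^{e_P}}$ and $|L\cap B|\geq 2$. Partitioning $B\setminus\{P\}$ by the unique secant through $P$ containing each point gives
\begin{equation*}
q+\kappa-1=|B|-1=\sum_{L\in\mathcal{S}(P)}(|L\cap B|-1),
\end{equation*}
with each summand a positive multiple of $p^{e_P}$. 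The immediate consequences $s\leq(q+\kappa-1)/p^{e_P}$ and $s\geq(q+\kappa-1)/(\kappa-1)$ (the latter using $|L\cap B|\leq\kappa$, which holds for non-trivial $B$: any $R\in L\setminus B$ forces $|B\setminus L|\geq q$ via the $q$ lines through $R$ distinct from $L$) fall short of the claimed estimate.

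For the sharper lower bound I would follow a Blokhuis-style affine/R\'edei argument. By minimality, fix a tangent $T$ to $B$ at $P$ and work in $\PG(2,q)\setminus T$ with $P$ at a distinguished direction at infinity, so that the ``vertical'' affine lines are precisely the affine parts of the projective lines through $P$. Let $B'=B\setminus\{P\}$, an affine set of size $q+\kappa-1$. Two structural facts hold: (i) every non-vertical affine line meets $B'$, since its projective closure differs from $T$ and from any line through $P$, and therefore hits $B$ away from $T\cap B=\{P\}$; (ii) for each vertical affine line $\ell$, the cardinality $|\ell\cap B'|$ equals $|L\cap B|-1$ for the projective line $L\supseteq\ell$ through $P$, hence is a (possibly zero) multiple of $p^{e_P}$. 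Fact (ii) gives $F(X):=\prod_{Q\in B'}(X-x(Q))=G(X)^{p^{e_P}}$ for some polynomial $G$ of degree $(q+\kappa-1)/p^{e_P}$ whose $s$ distinct roots in $\F_{q}$ are exactly the $x$-coordinates attained by $B'$. Fact (i) forces the R\'edei polynomial of $B'$ to admit the canonical decomposition with ``horizontal quotient'' of $X$-degree $\kappa-1$. Combining this decomposition with the $p^{e_P}$-th power factorization of $F$ via a lacunary polynomial argument then yields $s\geq(q-\kappa+1)/p^{e_P}+1$.

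The main obstacle is this last step. The $p^{e_P}$-th power factorization of $F$ in isolation gives only the upper bound $s\leq\deg G=(q+\kappa-1)/p^{e_P}$, and the R\'edei decomposition in isolation gives only the weak elementary estimates above. Combining them requires analyzing the remainder polynomial obtained on dividing $F$ by $X^{q}-X$, subject to the constraint that the difference with $(X^{q}-X)\cdot(\text{quotient})$ has exponent support contained in $p^{e_P}\mathbb{Z}$ (so that the whole expression is a $p^{e_P}$-th power). This constraint, together with the $X$-degree bound $\kappa-1$ inherited from the R\'edei side, forces $G$ to have at least $(q-\kappa+1)/p^{e_P}+1$ distinct roots in $\F_{q}$, which gives the claim.
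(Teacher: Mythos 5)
The paper does not prove this lemma at all: it is quoted verbatim from \cite[Lemma 2.4(1)]{aart}, so there is no internal proof to compare with, and your attempt has to stand on its own. As it stands it does not: everything you actually carry out (the partition of $B\setminus\{P\}$ over the secants through $P$, the choice of a tangent at $P$ as line at infinity, facts (i) and (ii), and the factorisation $F=G^{p^{e_P}}$ with $\deg G=(q+\kappa-1)/p^{e_P}$) is correct but routine setup, and you explicitly defer the step that \emph{is} the lemma (``the main obstacle is this last step''). A proof proposal whose decisive inequality is only promised, not derived, has a genuine gap.

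Moreover, the mechanism you sketch for that step would not work as described. Fact (i) is a statement about the two-variable R\'edei polynomial $\prod_{(a,b)\in B'}(X+aY-b)$ (each specialisation $Y=y\in\F_q$ is divisible by $X^q-X$, giving a quotient of degree $\kappa-1$); it does not directly impose on $F(X)=\prod_{Q\in B'}(X-x(Q))$ the ``remainder of $X$-degree $\kappa-1$'' constraint you invoke. What the R\'edei decomposition really yields for the direction $P$ --- by comparing top-degree homogeneous parts of $H=(X^q-X)f+(Y^q-Y)g$ --- is the lacunary shape $F(X)=X^qf_0(X)+g_0(X)$ with $\deg f_0,\deg g_0\le\kappa-1$; and reducing this modulo $X^q-X$, as you propose, only shows that the distinct $x$-coordinates are roots of $Xf_0+g_0$, i.e.\ it gives the \emph{upper} bound $s\le\kappa$, not a lower bound. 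The genuine argument continues differently: the exponent gap between $X^qf_0$ and $g_0$ forces $f_0$ and $g_0$ to be $p^{e_P}$-th powers, so $G$ itself inherits the lacunary form $G=X^{q/p^{e_P}}\varphi+\gamma$ with $\deg\varphi,\deg\gamma\le(\kappa-1)/p^{e_P}$; since $G$ is fully reducible, its non-reduced part divides both $G$ and $G'$, hence divides $\varphi'\gamma-\varphi\gamma'$, whose degree is at most $2(\kappa-1)/p^{e_P}-1$, and this is what produces $s\ge(q-\kappa+1)/p^{e_P}+1$; the degenerate case $\varphi'\gamma-\varphi\gamma'\equiv 0$ must then be excluded or handled separately using the maximality of $e_P$ (otherwise $F$ would be a $p^{e_P+1}$-th power). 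None of these ideas --- the passage through the homogeneous leading part of the two-variable polynomial, the transfer of lacunarity to the $p^{e_P}$-th root, the Wronskian bound, the use of maximality of $e_P$ --- appears in your proposal, so the claimed inequality is not established. (Your observation that the statement fails for the trivial blocking set, a line, is a fair catch; excluding that case is indeed how the lemma is meant to be read.)
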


\begin{lemma} \label{lemma3} A point $P$ with exponent $e_P=2e$ of a small minimal blocking set $B$ with exponent $e$ in $\PG(n,q_0^h)$, $q_0:=p^e\geq 7$, $p$ prime, lies on at least $q_0^{h-2}-q_0^{h-3}-q_0^{h-4}-3q_0^{h-5}+1$ secant lines to $B$.
\end{lemma}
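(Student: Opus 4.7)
The plan is to project $B$ from a carefully chosen $(n-3)$-dimensional subspace $R$ onto a plane $\pi\cong\PG(2,q_0^h)$, apply the planar bound of Lemma \ref{numbersecants} to the projected set $B^*=\phi(B)$, and lift the resulting count back to $\PG(n,q_0^h)$. First I would take $R$ disjoint from $B$, chosen so that iteratively projecting from the $n-2$ points generating $R$ and applying Lemma \ref{projection} yields $B^*$ as a small minimal blocking set in $\pi$; the point $P$ then projects to a point $P^*\in B^*$.

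The crucial step is arranging that the point exponent $e_{P^*}$ of $P^*$ in $B^*$ satisfies $e_{P^*}\le 2e$. Since $e_P=2e$ is the exact point exponent at $P$, there is a line $\ell_0$ through $P$ with $|\ell_0\cap B|$ nonzero and not congruent to $1\bmod p^{2e+1}$. If $R$ is additionally chosen disjoint from each plane $\langle b,\ell_0\rangle$ for $b\in B\setminus\ell_0$, then the hyperplane $H_0=\langle R,\ell_0\rangle$ meets $B$ exactly in $\ell_0\cap B$, so $\ell_0^*:=\phi(\ell_0)$ is a line through $P^*$ in $\pi$ with $|\ell_0^*\cap B^*|=|\ell_0\cap B|\not\equiv 1\bmod p^{2e+1}$. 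This witnesses $e_{P^*}\le 2e$. The hardest part of the proof is verifying that these genericity conditions on $R$ -- disjointness from $B$, from the planes $\langle b,\ell_0\rangle$, and the conditions needed for iterated application of Lemma \ref{projection} -- can all be met simultaneously by a dimension count on the Grassmannian of $(n-3)$-subspaces.

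With $e_{P^*}\le 2e$ and $\kappa^*:=|B^*|-q\le q_0^{h-1}+q_0^{h-2}+3q_0^{h-3}$ from Lemma \ref{size}, Lemma \ref{numbersecants} applied to $B^*$ at $P^*$ gives at least $(q-\kappa^*+1)/q_0^2+1\ge q_0^{h-2}-q_0^{h-3}-q_0^{h-4}-3q_0^{h-5}+1$ secants through $P^*$ in $B^*$. To conclude, each such secant $\ell^*$ lifts to a hyperplane $H=\phi^{-1}(\ell^*)$ of $\PG(n,q_0^h)$ containing $R$ and $P$, and a second $B^*$-point on $\ell^*$ pulls back to a point $y\in B\cap H\setminus\langle R,P\rangle$, so that the line $Py$ is a secant through $P$ in $B$. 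Distinct $\ell^*$ produce distinct hyperplanes $H$ whose pairwise intersection is $\langle R,P\rangle$; since the lifted secants $Py$ do not lie in $\langle R,P\rangle$, they are pairwise distinct. Hence the number of secants through $P$ in $B$ is at least the number of secants through $P^*$ in $B^*$, giving the claimed bound.
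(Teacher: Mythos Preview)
Your overall strategy---project $B$ down to a plane, apply Lemma~\ref{numbersecants} at the image point, and lift secants back---is exactly the paper's. The lifting argument you give (distinct secants through $P^*$ yield distinct secants through $P$ because the corresponding hyperplanes meet only in $\langle R,P\rangle$) is correct and matches the paper's reasoning. The difference is purely in how the projection is organised and how the condition $e_{P^*}\le 2e$ is secured.

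The paper proceeds one dimension at a time: at each stage it selects a single point $Q\notin B$ lying only on tangent lines to the current blocking set (so the projection is injective on $B$ and Lemma~\ref{projection} applies directly), and it projects onto a hyperplane containing a fixed witness line $L$ with $|L\cap B|=q_0^2+1$. A short pencil argument through $P$ then shows every line through $P$ meets $\tilde B$ in $1\pmod{q_0^2}$ points, while $L$ still witnesses $e_P\le 2e$, so $e_P=2e$ persists. No Grassmannian count is needed beyond the (easy) existence of such a $Q$.

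Your all-at-once projection from an $(n-3)$-space $R$ bundles all of this into simultaneous genericity conditions on $R$, which you yourself flag as ``the hardest part'' and do not actually carry out. Two places need genuine work. First, iterated use of Lemma~\ref{projection} requires each intermediate centre to lie outside the \emph{intermediate} projected blocking set; disjointness of $R$ from $B$ alone does not guarantee this, since a point of $R$ can project onto a point of $B$ at some stage. Second, for small $n$ a naive Grassmannian dimension count does not establish that $R$ can avoid every plane $\langle b,\ell_0\rangle$: for $n=3$, $R$ is a point and you are asking for a plane through $\ell_0$ meeting $B$ only in $\ell_0$, whose existence requires the $1\pmod{q_0}$ intersection property (each ``occupied'' plane contributes at least $q_0$ extra points, forcing an unoccupied one) rather than a pure dimension argument. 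Both issues are repairable, but the paper's stepwise route sidesteps them and is considerably shorter.
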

\begin{proof} If $n=2$, Lemma \ref{size}, together with Lemma \ref{numbersecants}, shows that the number of secant lines to $B$ is at least $(q_0^h-q_0^{h-1}-q_0^{h-2}-3q_0^{h-3}+1)/q_0^2+1\geq q_0^{h-2}-q_0^{h-3}-q_0^{h-4}-3q_0^{h-5}+1$.

If $n>2$, then let $L$ be a line through $P$, meeting $B$ in $q_0^2+1$ points. There exists a point $Q$, not on $B$, lying only on tangent lines to $B$. 
Let $\tilde{B}$ be the projection of $B$ from $Q$ onto a hyperplane through $L$. By Lemma \ref{projection}, $\tilde{B}$ is a small minimal blocking set in $\PG(n-1,q)$. It is clear that every line through $P$ meets $\tilde{B}$ in $1$ mod $q_0^2$ or $0$ points, and that there is a line, namely $L$, meeting $\tilde{B}$ in $1+q_0^2$ points, so $e_{P}=2e$ in the blocking set $\tilde{B}$. It follows that the number of secant lines through a point $P$ with exponent $2e$ to $B$ is at least the number of secant lines through the point $P$ with exponent $2e$ to $\tilde{B}$ in $\PG(n-1,q_0^h)$. Continuing this process, we see that this number is at least the number of secant lines through the point $P$ with exponent $2e$ in a small minimal blocking set $\tilde{B}$ in $\PG(2,q_0^h)$, and the statement follows.
\end{proof}

\subsection{The intersection with a plane}

In the following lemma, we will distinguish planes according to their intersection size with a small minimal blocking set. We will call a plane with $q_0^2+q_0+1$ non-collinear points of $B$ a {\em good} plane, while all other planes will be called {\em bad}. Note that also planes meeting $B$ in only points on a line, or skew to $B$ are called {\em bad}. The following lemma shows that good planes meet a small minimal blocking set in a linear set.

\begin{lemma} \label{eerste}If $\Pi$ is a plane of $\PG(n,q)$ containing at least $3$ non-collinear points of a small minimal  blocking set $B$ in $\PG(n,q)$, with exponent $e$, $q=p^t$, $p$ prime, $q_0:=p^e$, then
\begin{itemize}
\item[(i)]$q_0^2+q_0+1\leq \vert B\cap \Pi\vert.$
\item[(ii)] If $\vert B \cap \Pi\vert=q_0^2+q_0+1$, then $B \cap \Pi$ is $\F_{q_0}$-linear. 
\item[(iii)] If $\vert B \cap \Pi\vert >q_0^2+q_0+1$, then $\vert B \cap \Pi \vert \geq 2q_0^2+q_0+1$.
\end{itemize}
\end{lemma}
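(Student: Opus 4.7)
The proof hinges on the restricted possible line-intersection sizes in $\Pi$. First I would check that every line of $\Pi$ meets $B$ in $0$ or $1 \bmod q_0$ points; in particular, any line meeting $B$ in at least $2$ points has at least $q_0 + 1$ points of $B$, and such a $(q_0+1)$-secant is a $\PG(1, q_0)$-subline by Theorem \ref{sublines}. This line-intersection property goes beyond the $1 \bmod p$ statement of Theorem \ref{1modp}, so I would establish it by invoking the exponent hypothesis together with a projection argument in the spirit of Lemma \ref{projection} (projecting $B$ onto a plane, in which lines become hyperplanes, where the $1 \bmod q_0$ property is directly the definition of exponent).

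With that preliminary in hand, I would split into two cases according to the largest line-intersection with $B$ in $\Pi$. In \emph{Case A} (every secant has exactly $q_0 + 1$ points), each secant is a $\PG(1,q_0)$-subline. A counting argument through any $P \in B \cap \Pi$ gives $|B \cap \Pi| = 1 + s\,q_0$, where $s$ is the number of secants through $P$. Using the three non-collinear points together with the extra $B$-points on the sides of the resulting triangle, I would construct a $\PG(2,q_0)$-subplane $\pi_0$ of $\Pi$ by specifying one $B$-subline as a line of $\pi_0$ and a third non-collinear $B$-point off of it, and then argue by coherence that every $(q_0+1)$-secant of $B$ coincides with a line of $\pi_0$, forcing $B \cap \Pi = \pi_0$. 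This simultaneously pins down $|B \cap \Pi| = q_0^2 + q_0 + 1$ and establishes that $B \cap \Pi$ is $\F_{q_0}$-linear, settling part (ii).

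In \emph{Case B} (some line $L \subset \Pi$ has $|L \cap B| \geq 2q_0 + 1$), the non-collinearity assumption produces a point $P \in B \cap \Pi \setminus L$. For each $Q \in L \cap B$, the line $PQ$ is a secant of $B$, hence contains at least $q_0 + 1$ points of $B$ and therefore at least $q_0 - 1$ points outside $L \cup \{P\}$. The $|L \cap B|$ such lines meet pairwise only at $P$, so the off-$L$ contributions are disjoint, giving
\[
|B \cap \Pi| \;\geq\; 1 + |L \cap B| + |L \cap B|(q_0 - 1) \;=\; 1 + |L \cap B|\cdot q_0 \;\geq\; 2q_0^2 + q_0 + 1,
\]
which yields (iii). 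Assertion (i) is then immediate: Case A gives exactly $q_0^2 + q_0 + 1$ and Case B gives strictly more.

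The main obstacle is completing Case A, i.e.\ showing that the $(q_0+1)$-sublines arising as secants of $B$ assemble into the lines of a single $\PG(2,q_0)$-subplane. There are $(q-1)/(q_0-1)$ distinct $\PG(1,q_0)$-sublines through any two fixed points of a line of $\PG(2,q)$, so a priori the subline $PQ \cap B$ need not coincide with $PQ \cap \pi_0$ for an arbitrarily chosen subplane $\pi_0$; one must build $\pi_0$ from $B$ itself --- using a specific $B$-subline as a line and an off-line $B$-point as a point --- and then propagate membership to every remaining $B$-point via secant-by-secant coherence. A secondary technical point is the preliminary $1 \bmod q_0$ condition for lines, which strengthens the $1 \bmod p$ result of Theorem \ref{1modp} and relies essentially on the exponent hypothesis.
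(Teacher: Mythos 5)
Your preliminary step (lines meet $B$ in $0$ or $1 \bmod q_0$ points), your counting for (i), and your Case B estimate $|B\cap\Pi|\geq 1+|L\cap B|\,q_0\geq 2q_0^2+q_0+1$ are all fine and agree with the paper. The genuine gap is your Case A claim that if every secant of $B\cap\Pi$ has exactly $q_0+1$ points then the sublines ``cohere'' into a single $\F_{q_0}$-subplane $\pi_0$ with $B\cap\Pi=\pi_0$, so that $|B\cap\Pi|=q_0^2+q_0+1$. As a purely configurational statement this is false: a Hermitian unital in $\PG(2,q_0^2)$ has $q_0^3+1$ points, every line meets it in $1$ or $q_0+1$ points, and each of its $(q_0+1)$-secants is a Baer subline, yet it is not a subplane and is much larger than $q_0^2+q_0+1$. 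So two $(q_0+1)$-secants of $B$ through two points of $B\cap\Pi$ need not meet in a point of $B$, and nothing forces all the sublines into one subplane; your sketched propagation has no mechanism to rule this out. The correct logical direction, which the paper uses for (ii), is the reverse: \emph{assuming} $|B\cap\Pi|=q_0^2+q_0+1$, two $(q_0+1)$-secants meeting in a point outside $B$ would already give at least $q_0^2+2q_0+1$ points, a contradiction; hence any two secants meet inside $B$, and the set is an $\F_{q_0}$-subplane.

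Consequently you have no argument for (iii) in the genuinely hard case: every line of $\Pi$ meets $B$ in $0$, $1$ or $q_0+1$ points but $|B\cap\Pi|>q_0^2+q_0+1$ (this case really occurs, as the unital example suggests, and the lemma only claims a lower bound there, not an exact size). The paper's proof of (iii) handles it as follows: one may assume no $\F_{q_0}$-subplane is contained in $B\cap\Pi$ (otherwise $|B\cap\Pi|\geq q_0^3+q_0^2+1$); take a point $P$, two sublines $L\cap B$ and $M\cap B$ through $P$, which generate a unique $\F_{q_0}$-subplane $\Pi_0$, and consider ``cross'' secants $N_1,N_2$ joining points of $L\cap B$ and $M\cap B$ and avoiding $P$. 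If $N_1$ and $N_2$ shared a $B$-point $T$ off $L\cup M$ (necessarily in $\Pi_0$), then $N_1\cap B$ would be a subline containing three points of the subline $N_1\cap\Pi_0$, hence equal to it, and repeating this argument forces $\Pi_0\subseteq B$, a contradiction; therefore the $q_0-1$ off-$(L\cup M)$ points of $B$ on distinct cross-secants are pairwise distinct, and counting over all such secants gives $|B\cap\Pi|\geq q_0^2(q_0-1)+2q_0+1\geq 2q_0^2+q_0+1$. Some argument of this kind (exploiting minimality of the count rather than asserting a subplane structure) is what your proposal is missing.
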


\begin{proof}
(i) By Lemma \ref{1modp}, every line meets $B$ in $1$ mod $q_0$ or 0 points. Since we find $3$ non-collinear points, it is easy to see that $\vert B\cap \Pi\vert \geq q_0^2+q_0+1$.

(ii) From the previous argument, we easily see that if $\vert B\cap \Pi\vert = q_0^2+q_0+1$, then every line in $\Pi$ contains $0$, $1$ or $q_0+1$ points of $B$. Suppose that there exist two $(q_0+1)$-secants that meet in a point, not in $B$, then the number of points in $\Pi \cap B$ is at least $q_0^2+q_0+1+q_0$. Hence, every two $(q_0+1)$-secants meet in a point of $B$. Moreover, through two points of $B\cap \Pi$, there is a unique $(q_0+1)$-secant, so $B$ meets $\Pi$ in an $\F_{q_0}$-subplane.

(iii) By Theorem \ref{1modp}, if there is a line $L$ of $\Pi$ containing more than $(q_0+1)$ points of $B$, then $\vert L \cap B\vert \geq 2q_0+1$, and $\vert \Pi\cap B\vert \geq 2q_0^2+q_0+1$. So from now on, we may assume that every line meets $B$ in $0$, $1$ or $q_0+1$ points. If there is an $\F_{q_0}$-subplane strictly contained in $\Pi \cap B$, then clearly $\vert B \cap \Pi\vert \geq q_0^3+q_0^2+1$, so we may assume that there is no $\F_{q_0}$-subplane contained in $\Pi\cap B$.

Let $L$ be a $(q_0+1)$-secant in $\Pi$, let $P$ be a point of $B\cap L$, let $Q$ be a point of $B\setminus L$ and let $M$ be the line $PQ$. From Theorem \ref{sublines}, we know that  $L\cap B$ and $M\cap B$ are sublines over $\F_{q_0}$. 
These sublines define a unique $\F_{q_0}$-subplane $\Pi_0$. 
Let $N_1$ be a line, not through $P$, through a point of $L\cap B$, say $R_1$ and of $M\cap B$, say $R_2$. Let $N_2$ be another line, not through $R_1$ or $R_2$, meeting $L$ in a point $R_3$ of $B$ and $M$ in a point $R_4$ of $B$. If $T$ is the intersection point of $N_1$ and $N_2$, then $T$ belongs to the subplane $\Pi_0$. 

Now suppose that $T$ is a point of $B$, then $N_1$ meets $B$ in a subline, containing $3$ points of the subline $\Pi_0\cap N_1$. This implies that the subline $N_1\cap B$ is completely contained in $B$. The same holds for the subline $N_2\cap B$, and repeating the same argument, for every subline through $T$ meeting $L$ and $M$ in points of $B$, different from $P$. Again repeating the same argument, for a point $T'\neq T$ on $N_1$, not on $L$ or $M$, yields that $\Pi_0$ is contained in $B$, a contradiction. This implies that the $q_0-1$ points of $B$ on the line $N_1$, not on $L$ or $M$, are different from the $q_0-1$ points of $B$ on the line $N_2$, not on $L$ or $M$. Varying $N_1$ and $N_2$ over all lines meeting $L$ and $M$ in points of $B$, we get that there are at least $q_0^2(q_0-1)+2q_0+1$ points in $B\cap \Pi$.
\end{proof}

To avoid abundant notation, we continue with the following hypothesis on $B$.
\begin{itemize}
\item[] $B$ is a small minimal blocking set in $\PG(n,q)$, with exponent $e$, $q=p^t$, $p$ prime, $q_0:=p^e$, $t/e=h$, {\bf spanning an $(h-1)$-dimensional space}.
\end{itemize}

\begin{lemma}\label{lemma2}
A plane of $\PG(n,q)$ contains at most $q_0^3+q_0^2+q_0+1$ points of $B$.
\end{lemma}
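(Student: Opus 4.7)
The plan is to exploit the hypothesis that $B$ spans $\langle B\rangle$ of dimension $h-1$, by iteratively enlarging $\Pi$ step by step until it fills $\langle B\rangle$, collecting points of $B$ at each step. The resulting lower bound on $|B|$, combined with the upper bound of Lemma \ref{size} and the $1\bmod q_0$ congruence, will force the desired inequality.

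First I would reduce to the case $\Pi\subseteq\langle B\rangle$: in the contrary case $\Pi\cap\langle B\rangle$ has dimension at most $1$, so $B\cap\Pi$ lies on a line, and a parallel (easier) iteration starting from a line gives the strictly sharper bound $q_0^2+q_0+1$. Assuming $\Pi\subseteq\langle B\rangle$, set $\Pi_0=\Pi$ and choose successively points $P_i\in B$ with $P_i\notin\Pi_{i-1}$, where $\Pi_i=\langle P_i,\Pi_{i-1}\rangle$, for $i=1,\ldots,h-3$. Such $P_i$ exists because $\dim\Pi_{i-1}=i+1<h-1=\dim\langle B\rangle$ and $B$ spans $\langle B\rangle$; after $h-3$ steps one has $\Pi_{h-3}=\langle B\rangle$. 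When $h=3$ the iteration is empty and $\Pi=\langle B\rangle$ directly.

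The key combinatorial step is the recursion $|B\cap\Pi_i|\geq 1+q_0\,|B\cap\Pi_{i-1}|$. For each $R\in B\cap\Pi_{i-1}$ the line $\ell=P_iR$ contains the two distinct points $P_i,R\in B$, so by the $1\bmod q_0$ intersection property (Theorem \ref{1modp} applied to $\ell$, in the form used in the proof of Lemma \ref{eerste}) it contains at least $q_0+1$ points of $B$. The $q_0-1$ extra points lie in $\Pi_i\setminus(\Pi_{i-1}\cup\{P_i\})$, and since two distinct lines through $P_i$ meet only at $P_i$, these extras are pairwise disjoint across different choices of $R$. Iterating the recursion yields
\[
|B|=|B\cap\Pi_{h-3}|\;\geq\;q_0^{h-3}\,|B\cap\Pi|+\frac{q_0^{h-3}-1}{q_0-1}.
\]

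Combining this with the bound $|B|\leq q_0^h+q_0^{h-1}+q_0^{h-2}+3q_0^{h-3}$ from Lemma \ref{size} and dividing by $q_0^{h-3}$ gives $|B\cap\Pi|\leq q_0^3+q_0^2+q_0+3$. Since $|B\cap\Pi|\equiv 1\pmod{q_0}$ by Theorem \ref{1modp} and $q_0\geq 7$, the only admissible value in that window is at most $q_0^3+q_0^2+q_0+1$, which is the claimed bound. The main subtlety lies in the purity of the count in the recursion step---ensuring the $q_0-1$ new points produced by different lines $P_iR$ through $P_i$ are genuinely distinct and lie outside $\Pi_{i-1}\cup\{P_i\}$---but both properties follow from the choice $P_i\notin\Pi_{i-1}$, which forces each line $P_iR$ to meet $\Pi_{i-1}$ exactly at $R$.
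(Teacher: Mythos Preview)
Your proof is correct and follows essentially the same route as the paper's: iteratively enlarge the plane one dimension at a time by adjoining a point of $B$, use the $(q_0+1)$-secant property to get the recursion $|B\cap\Pi_i|\ge 1+q_0|B\cap\Pi_{i-1}|$, and compare with the upper bound of Lemma~\ref{size}. The only cosmetic differences are that the paper argues by contradiction (applying the $1\bmod q_0$ congruence at the start to pass from $>q_0^3+q_0^2+q_0+1$ to $\ge q_0^3+q_0^2+2q_0+1$), whereas you argue directly and apply the congruence at the end; also, your case split on whether $\Pi\subseteq\langle B\rangle$ is harmless but unnecessary, since $|B|\ge|B\cap\Pi_{h-3}|$ suffices in place of your equality $|B|=|B\cap\Pi_{h-3}|$.
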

\begin{proof}
Suppose there exists a plane $\Pi$ with more than $q_0^3+q_0^2+q_0+1$ points of $B$, then, by Theorem \ref{1modp}, $\vert \Pi \cap B\vert \geq q_0^3+q_0^2+2q_0+1$. We prove by induction that, for all $2\leq k\leq h-1$, there is a $k$-space, containing at least $(q_0^{k+2}-1)/(q_0-1)+q_0^{k-1}$ points of $B$. The case $k=2$ is already settled, so suppose there is a $j$-space $\Pi_j$, $j<h-1$, containing at least $(q_0^{j+2}-1)/(q_0-1)+q_0^{j-1}$ points of $B$. Since $B$ spans an $(h-1)$-space and $j<h-1$, there is a point $Q$ in $B$, not in $\Pi_j$. Because a line containing two points of $B$ contains at least $q_0+1$ points of $B$, this implies that $\vert \langle Q,\Pi_j\rangle\cap B\vert\geq (q_0^{j+3}-1)/(q_0-1)+q_0^{j}$. By induction, we obtain that $B$ contains at least $(q_0^{h+1}-1)/(q_0-1)+q_0^{h-2}$ points, a contradiction, since $\vert B \vert \leq q_0^{h}+q_0^{h-1}+q_0^{h-2}+3q_0^{h-3}$.
\end{proof}

\begin{lemma} \label{goodplanes}
Let $L$ be a $(q_0+1)$-secant to $B$. Then either $L$ lies on at least $q_0^{h-2}-4q_0^{h-3}+1$ good planes, or $L$ lies on bad planes only. In the latter case, all planes with points of $B$ contain at least $q_0^3+q_0+1$ points of $B$ outside of $L$.
\end{lemma}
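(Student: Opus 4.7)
I fix a point $R\in L\cap B$ and count the $(q_0+1)$-secants through $R$ other than $L$ by distributing them among the planes through $L$. Each such secant $M$ lies in the unique plane $\langle L,M\rangle$, and by Lemma \ref{lemma1} the total number of these secants is at least $q_0^{h-1}-4q_0^{h-2}$. Write $g$ for the number of good planes through $L$, and for a plane $\pi$ through $L$ containing a point of $B$ off $L$, set $n_\pi:=|B\cap\pi|-(q_0+1)$ and let $s_R(\pi)$ denote the number of $(q_0+1)$-secants through $R$ in $\pi$ other than $L$.

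For a good plane $\pi$, Lemma \ref{eerste}(ii) shows $B\cap\pi$ is an $\F_{q_0}$-subplane $\PG(2,q_0)$, whose $q_0+1$ lines through $R$ account for $L$ and $q_0$ further $(q_0+1)$-secants, so $s_R(\pi)=q_0$. For a bad plane $\pi$ containing a point of $B$ off $L$, Lemma \ref{eerste}(iii) gives $n_\pi\geq 2q_0^2$, and Theorem \ref{1modp} applied to the lines through $R$ in $\pi$ distinct from $L$ (each meeting $B$ in $1\bmod q_0$ or $0$ points) yields $s_R(\pi)\leq n_\pi/q_0$, since the $n_\pi$ off-$L$ points are split into blocks of size divisible by $q_0$ on those lines. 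Summing produces the master inequality
\[
g\cdot q_0\;+\;\sum_{\pi\text{ bad}} s_R(\pi)\;\geq\;q_0^{h-1}-4q_0^{h-2},
\]
where the sum is over bad planes through $L$ with a point of $B$ off $L$, while Lemma \ref{size} controls $\sum_\pi n_\pi=|B|-(q_0+1)\leq q_0^h+q_0^{h-1}+q_0^{h-2}+3q_0^{h-3}$.

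The dichotomy is extracted by case analysis on the bad-plane sizes. If every bad plane through $L$ containing a point of $B$ off $L$ satisfies $n_\pi\geq q_0^3+q_0+1$, the second alternative of the statement holds, and $g=0$ follows from combining the master inequality with the size bound. Otherwise some plane through $L$ has $n_\pi\leq q_0^3+q_0$ and is either good or a ``small bad'' plane (giving $s_R(\pi)\leq q_0^2+1$); I then split the bad planes into those with $n_\pi\leq q_0^3+q_0$ and those with $n_\pi\geq q_0^3+q_0+1$, bound their secant contributions separately in the master inequality, and combine with the size bound to derive $g\geq q_0^{h-2}-4q_0^{h-3}+1$. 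The main obstacle is this last quantitative step: the uniform bound $s_R(\pi)\leq n_\pi/q_0$, summed and combined with the size bound, collapses to the tautological inequality $|B|\geq q_0^h-4q_0^{h-1}+q_0+1$ without directly constraining $g$, so one must exploit the asymmetry between small bad planes (contributing at most $q_0^2+1$ secants through $R$ regardless of the precise value of $n_\pi$ in $[2q_0^2,q_0^3+q_0]$) and large bad planes (contributing up to $n_\pi/q_0\leq q_0^2+q_0$), together with the point budget from Lemma \ref{size}, to pin down the sharp threshold $q_0^{h-2}-4q_0^{h-3}+1$.
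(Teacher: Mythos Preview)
Your counting approach has a genuine gap that you yourself identify but do not close. The inequality $s_R(\pi)\le n_\pi/q_0$ is tight in the sense that each $(q_0+1)$-secant through $R$ (other than $L$) contributes exactly $q_0$ points to $n_\pi$; this ratio is the same for good planes, small bad planes, and large bad planes alike. Consequently your ``asymmetry'' between small and large bad planes is illusory: splitting the bad planes by size and bounding $s_R(\pi)$ separately still only yields $\sum_\pi s_R(\pi)\le (\sum_\pi n_\pi)/q_0\le |B|/q_0$, and combining with the master inequality gives back $|B|\ge q_0^h-4q_0^{h-1}+q_0+1$, which is vacuous. Your case analysis is also not well-posed: in Case~1 (all bad planes large) nothing forces $g=0$; a configuration with a few good planes and the rest large bad planes is perfectly compatible with both the master inequality and the size bound, so you cannot conclude the ``bad planes only'' alternative from that hypothesis alone.

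The paper's proof takes a completely different route that supplies exactly the structural rigidity your counting lacks. It chooses $Q\in L\setminus B$ and projects $B$ from $Q$ onto a hyperplane $H$, obtaining a small minimal blocking set $\tilde B$ (Lemma~\ref{projection}). Planes through $L$ correspond to lines through $P:=L\cap H$ in $H$, and the key object is the \emph{exponent} $e_P$ of $P$ in $\tilde B$. This single integer controls \emph{every} plane through $L$ simultaneously: $e_P=1$ is excluded by Lemma~\ref{lemma1} and the size bound; $e_P\ge 4$ is excluded by Lemma~\ref{lemma2}; $e_P=3$ forces every plane through $L$ (with points off $L$) to contain at least $q_0^3+q_0+1$ points, giving the ``all bad'' alternative; and $e_P=2$ invokes Lemma~\ref{lemma3} to produce at least $q_0^{h-2}-q_0^{h-3}-q_0^{h-4}-3q_0^{h-5}+1$ planes through $L$ with points off $L$, of which at most $3q_0^{h-3}-q_0^{h-4}-3q_0^{h-5}$ can be bad (by Lemma~\ref{eerste}(iii) and the size bound), yielding the required number of good planes. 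The dichotomy thus comes from the discreteness of $e_P$, not from any counting over planes through $L$; this is the missing idea in your plan.
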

\begin{proof}
Let $Q$ be a point on $L$, not on $B$. We project $B$ from $Q$ onto a hyperplane $H$, not through $Q$, and denote the image of this projection by $\tilde{B}$. Let $P$ be the point $L\cap H$. It follows from Lemma \ref{projection}, that $\tilde{B}$ is a small minimal blocking set. Since every subspace meets $B$ in $1$ mod $q_0$ or $0$ points, every subspace meets $\tilde{B}$ in $1$ mod $q_0$ or $0$ points. Suppose that $P$ has exponent $e_P=1$, then it follows from Lemma \ref{lemma1} that $P$ lies on at least $q_0^{h-1}-4q_0^{h-2}+1$ $(q_0+1)$-secants. This means that there are at least $q_0^{h-1}-4q_0^{h-2}+1$ planes through $L$ containing at least $q_0^2+q_0+1$ points of $B$, which implies that $\vert B\vert \geq q_0^2(q_0^{h-1}-4q_0^{h-2}+1)$, a contradiction since $\vert B \vert \leq q_0^h+q_0^{h-1}+q_0^{h-2}+3q_0^{h-3}$ by Lemma \ref{size}.

If $P$ has exponent $e_P$ at least $4$, we get that the planes through $L$ which contain a point of $B$, not on $L$, contain at least $q_0^4+q_0+1$ points of $B$, which is impossible by Lemma \ref{lemma2}. We conclude that $P$ has exponent $e_P=2$ or $e_P=3$. If $P$ has exponent $e_P=3$, then every plane through $L$ that contains a point of $B$ not on $L$, contains at least $q_0^3+q_0+1$ points, and hence, all planes through $L$ are bad.

Finally, if $P$ has exponent $2$, we know from Lemma \ref{lemma3} that there are at least $s=q_0^{h-2}-q_0^{h-3}-q_0^{h-4}-3q_0^{h-5}+1$ secant lines through $P$, which implies that there are at least $s$ planes through $L$ containing a point of $B$ outside of $L$. Suppose $t$ of the $s$ planes are bad, then, using Lemma \ref{eerste}(iii), $B$ contains at least
$t(2q_0^2)+(s-t)(q_0^2)+q_0+1$ points. If we put $t=3q_0^{h-3}-q_0^{h-4}-3q_0^{h-5}+1$, we get a contradiction since $\vert B \vert \leq q_0^h+q_0^{h-1}+q_0^{h-2}+3q_0^{h-3}$ by Lemma \ref{size}.
\end{proof}

\begin{lemma}\label{eenslecht}
A point $P$ of $B$ lying on a $(q_0+1)$-secant, lies on at most one $(q_0+1)$-secant $L$ that lies on only bad planes.
\end{lemma}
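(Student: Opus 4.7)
The plan is to argue by contradiction. Suppose $P$ lies on two distinct $(q_0+1)$-secants $L_1,L_2$ both lying on only bad planes, and set $\Pi=\langle L_1,L_2\rangle$. The first step is to apply Lemma~\ref{goodplanes} to $L_1$: since $\Pi$ is a plane through $L_1$ containing the $q_0$ points of $L_2\cap B\setminus\{P\}$ off $L_1$, the lemma yields at least $q_0^3+q_0+1$ points of $B$ in $\Pi$ outside $L_1$; in particular $|B\cap\Pi\setminus(L_1\cup L_2)|\geq q_0^3+1$ and $|B\cap\Pi|\geq q_0^3+2q_0+2$. Coupled with the upper bound $|B\cap\Pi|\leq q_0^3+q_0^2+q_0+1$ from Lemma~\ref{lemma2}, the cardinality of $B\cap\Pi$ already falls in a narrow range, though no contradiction is yet visible.

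Next, each of the many extra points $Y\in B\cap\Pi\setminus(L_1\cup L_2)$ joins $P$ by a line $PY\subset\Pi$ distinct from $L_1$ and $L_2$ which, containing the two $B$-points $P$ and $Y$, is a $(q_0+1)$-or-longer secant through $P$. A standard $1 \bmod q_0$ count then produces many multi-secants through $P$ in $\Pi$ beyond $L_1$ and $L_2$. To each such $(q_0+1)$-secant $m$ through $P$ in $\Pi$ we would apply the dichotomy of Lemma~\ref{goodplanes}: either $m$ is itself a bad secant through $P$, which via its own projection (pick $Q\in m\setminus B$; the image of $m$ has exponent $3$ and we rerun the analysis above) contributes further structural constraints on $B$, or $m$ is good, lying on at least $q_0^{h-2}-4q_0^{h-3}+1$ good planes each contributing $q_0^2$ points of $B$ off $m$ outside $\Pi$.

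Finally, combining these contributions with the symmetric analysis obtained by swapping the roles of $L_1$ and $L_2$ (a projection from a point of $L_2\setminus B$ gives the same bound $\geq q_0^3+1$ on extra points in $\Pi$), one would push the count either past the plane-bound of Lemma~\ref{lemma2} or past the global size-bound of Lemma~\ref{size}, giving the desired contradiction. The main obstacle is this final counting: the bounds of Lemmas~\ref{size} and \ref{lemma2} are sharp up to constants, so the contributions from good planes attached to the many $(q_0+1)$-secants through $P$ must be summed carefully, especially to avoid double-counting $B$-points common to distinct good planes of distinct secants, in order to force a strict violation rather than near-equality.
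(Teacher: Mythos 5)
Your proposal stops exactly where the real work begins, and the route you set up does not close. Inside the plane $\Pi=\langle L_1,L_2\rangle$ the information you extract, $q_0^3+q_0+1\leq\vert B\cap\Pi\vert\leq q_0^3+q_0^2+q_0+1$, is perfectly self-consistent, as you yourself note; so the contradiction must come from outside $\Pi$, and your sketch of that step does not work as stated. The secants $m=PY$ through $P$ inside $\Pi$ number at most roughly $q_0^2+q_0$, each good such $m$ carries at least $q_0^{h-2}-4q_0^{h-3}+1$ good planes with $q_0^2$ points of $B$ off $\Pi$, so the incidence count you would like to exploit is of order $q_0^{h+2}$; but a fixed point $z\notin\Pi$ lies in at most one plane $\langle m,z\rangle$ per secant $m$, hence can be counted up to roughly $q_0^2$ times, and $\vert B\vert\cdot q_0^2$ is also of order $q_0^{h+2}$. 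The two sides are of the same order with compatible constants, so no contradiction follows from this count without a genuine disjointness device to prevent the double-counting you flag as the "main obstacle". Two further gaps: the branch in which some $m$ is itself bad is waved away ("further structural constraints" is not an argument, and at this stage you cannot assume only $L_1,L_2$ are bad), and nowhere does your count actually use that the \emph{second} line lies on only bad planes, which has to be the engine of the contradiction.

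The paper's proof is global rather than confined to $\langle L,L'\rangle$, and it manufactures exactly the disjointness your plan lacks. By Lemma \ref{lemma1}, $P$ lies on at least $q_0^{h-1}-4q_0^{h-2}+1$ $(q_0+1)$-secants, while by Lemmas \ref{goodplanes} and \ref{size} there are at most $q_0^{h-3}+2q_0^{h-4}$ planes through $L$ meeting $B$ off $L$; hence some plane $\Pi_1$ not containing $L'$ carries at least $q_0^2-6q_0+1$ of these secants. An induction on dimension (an incidence count at each step against the bound of Lemma \ref{size}) then produces an $(h-2)$-space through $\Pi_1$, not containing $L'$, containing at least $q_0^{h-2}-6q_0^{h-3}$ of the $(q_0+1)$-secants through $P$. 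Joining $L'$ to each of these secants gives that many \emph{distinct} planes through $L'$, all bad by hypothesis, each with at least $q_0^3$ points of $B$ off $L'$, and any two of them meet only in $L'$; so the counts add up and $\vert B\vert\geq(q_0^{h-2}-6q_0^{h-3})q_0^3$, contradicting Lemma \ref{size}. To salvage your plane-based approach you would need a comparable mechanism forcing the contributions to be pairwise disjoint; as written, the proposal is a plan whose decisive step is missing.
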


\begin{proof}
Let $P$ be a point of $B$ lying on a $(q_0+1)$-secant and let $L$ be a line through $P$ that only lies on bad planes. From Lemma \ref{lemma2} and Lemma \ref{goodplanes}, we get that $ q_0^3+q_0+1 \leq \vert \Pi\cap B\vert \leq q_0^3+q_0^2+q_0+1$ for all planes $\Pi$ through $L$, containing points of $B$ outside of $L$.

By Lemma \ref{size}, $\vert B \vert \leq q_0^h+q_0^{h-1}+q_0^{h-2}+3q_0^{h-3}$, so there are at most $q_0^{h-3}+2q_0^{h-4}$ planes through $L$ containing points of $B$ outside of $L$. Since $P$ lies on at least $q_0^{h-1}-4q_0^{h-2}+1$ $(q_0+1)$-secants, there are at least two planes $\Pi_1$ and $\Pi_2$ containing at least $q_0^2-6q_0+1$ $(q_0+1)$-secants through $P$.
Suppose that $L'$ is a $(q_0+1)$-secant through $P$, different from $L$, lying on only bad planes. At least one of the planes $\Pi_1,\Pi_2$, say $\Pi_1$, does not contain $L'$.

We will now show that for all $k\leq h-2$, there exists a $k$-space through $\Pi_1$, not containing $L'$, containing at least $q_0^{k}-6q_0^{k-1}$ $(q_0+1)$-secants through $P$. For $k=2$, the statement is true, hence, suppose it holds for all $k\leq j<h-2$. Let $\Pi'$ be a $j$-space through $\Pi_1$, not containing $L'$ and containing at least $q_0^j-6q_0^{j-1}$ $(q_0+1)$-secants through $P$.

Let $\vert \Pi'\cap B\vert=A$, then a $(j+1)$-space $\Pi''$ through $\Pi'$, containing a point of $B$, not in $\Pi'$, contains at least $(q_0-1)A+1$ points of $B$, not in $\Pi'$. We see that the number of $(j+1)$-spaces containing a point of $B$, not in $\Pi'$, is maximal if the number of points in $\Pi'$ is minimal. Since $\vert B\cap \Pi_1\vert \geq q_0^3+q_0+1$, $\vert B \cap \Pi'\vert \geq (q_0^{3}+q_0+1)q_0^{j-3}+1$. This implies that the number of points of $B$ in such a $(j+1)$-space, outside of $\Pi'$ is at least $q_0^{j+1}-q_0^{j}+q_0^{j-1}-q_0^{j-3}+q_0$. Since $\vert B \vert \leq q_0^h+q_0^{h-1}+q_0^{h-2}+3q_0^{h-3}$, the number of such $(j+1)$-spaces is at most $q_0^{h-j-1}+2q_0^{h-j-2}+4q_0^{h-j-3}$. At most $(q_0^{j+1}-1)/(q_0-1)$ $(q_0+1)$-secants through $P$ lie in $\Pi'$. Suppose that all $(j+1)$-spaces through $\Pi'$, except possibly $\langle \Pi',L\rangle$, contain at most $q_0^j-6q_0^{j-1}$ $(q_0+1)$-secants through $P$, not in $\Pi'$, then the number of $(q_0+1)$-secants through $P$ is at most
$$(q_0^{h-j-1}+2q_0^{h-j-2}+4q_0^{h-j-3}-1)(q_0^j-6q_0^{j-1})+(q_0^{j+1}-1)/(q_0-1),$$
a contradiction if $j<h-2$, since there are at least $q_0^{h-1}-4q_0^{h-2}+1$ $(q_0+1)$-secants through $P$.
We may conclude, by induction, that there exists an $(h-2)$-space $\Pi''$, not through $L'$, that contains at least $q_0^{h-2}-6q_0^{h-3}$ $(q_0+1)$-secants through $P$. Since $L'$ does not lie in $\Pi''$, this implies that there are at least $q_0^{h-2}-6q_0^{h-3}$ different planes through $L'$ that each have at least $q_0^3$ points outside of $L$, a contradiction since $\vert B \vert \leq q_0^h+q_0^{h-1}+q_0^{h-2}+3q_0^{h-3}$. This implies that there is at most one line through $P$ that lies on only bad planes.
\end{proof}

\section{The proof of the main theorem}

\begin{lemma}\label{span} Assume $h>3$ and $q_0>5h-11$. Denote the $(q_0+1)$-secants, not lying on only bad planes, through a point $P$ of $B$ that lies on at least one $(q_0+1)$-secant, by $L_1,\ldots,L_{s}$. Let $x$ be a point of the spread element corresponding to $P$ in $\PG(h(n+1)-1,q_0)$ and let $\ell_i$ be the line through $x$ such that $\B(\ell_i)=L_i\cap B$. Let $\L=\{\ell_1,\ldots,\ell_s\}$, then $\langle \L\rangle$ has dimension $h$. 
\end{lemma}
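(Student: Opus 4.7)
I would prove the two inequalities $\dim\langle\L\rangle\geq h$ and $\dim\langle\L\rangle\leq h$ separately. The lower bound is a short counting argument using Lemmas \ref{lemma1} and \ref{eenslecht} together with the disjointness of distinct spread elements. The upper bound is the main difficulty: I project from the spread element $S_P$ and use the good-plane structure of Lemma \ref{goodplanes} to glue the projected lines into a subspace of the right dimension.

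\textbf{Lower bound.} By Lemma \ref{lemma1}, $P$ lies on at least $q_0^{h-1}-4q_0^{h-2}+1$ $(q_0+1)$-secants, and Lemma \ref{eenslecht} removes at most one of them from $\{L_1,\ldots,L_s\}$, so $s\geq q_0^{h-1}-4q_0^{h-2}$. Since two distinct secants through $P$ meet only at $P$, the sublines $\B(\ell_i)$ pairwise intersect only at $P$, and $\ell_i\subseteq\langle\L\rangle$ yields
\[
|\B(\langle\L\rangle)|\;\geq\;\left|\textstyle\bigcup_i\B(\ell_i)\right|\;=\;1+sq_0\;\geq\;q_0^h-4q_0^{h-1}+1.
\]
On the other hand $|\B(\langle\L\rangle)|\leq|\langle\L\rangle|=(q_0^{d+1}-1)/(q_0-1)$ (distinct spread elements being disjoint), with $d:=\dim\langle\L\rangle$. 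For $d\leq h-1$ the right side is $<h\cdot q_0^{h-1}$, and $q_0>5h-11\geq h+4$ (using $h\geq 4$) yields a contradiction; hence $d\geq h$.

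\textbf{Upper bound.} Project $\PG(h(n+1)-1,q_0)$ from the spread element $S_P$ onto a skew subspace: the Desarguesian $(h-1)$-spread $\D$ restricts to a Desarguesian $(h-1)$-spread of $\PG(h(n+1)-h-1,q_0)$, modelling the quotient $\PG(n,q_0^h)/P$. Because $\ell_i\cap S_P=\{x\}$, each $\ell_i$ projects to a single point $\hat\ell_i$ lying inside the spread element $\sigma_{L_i}/S_P$, and the $\hat\ell_i$ are $s$ distinct points lying in $s$ distinct spread elements. Each good plane $\Pi$ through a secant $L_j$ corresponds to a plane $\pi_\Pi$ of $\PG(h(n+1)-1,q_0)$ through $x$ with $\B(\pi_\Pi)=\Pi\cap B$, and $\pi_\Pi$ projects from $S_P$ to a line in the quotient passing through the $\hat\ell_i$ coming from the secants of $\Pi$ through $P$. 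By Lemma \ref{goodplanes} every $L_j$ lies on many good planes, and chaining the resulting projected lines forces $\dim\langle\hat\ell_1,\ldots,\hat\ell_s\rangle\leq h-1$. Combined with the projection identity $\dim\langle\L\rangle=\dim\langle\hat\ell_i\rangle+\dim(\langle\L\rangle\cap S_P)+1$ and the observation that $\langle\L\rangle\cap S_P=\{x\}$ (any additional point in the intersection would force two of the $\ell_i$'s to share a second point in $S_P$, contradicting the distinctness of their directions $L_i$), this yields $\dim\langle\L\rangle\leq h$.

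\textbf{Main obstacle.} The key step is the upper bound, specifically showing $\dim\langle\hat\ell_1,\ldots,\hat\ell_s\rangle\leq h-1$ by chaining good planes in the quotient. The hypothesis $q_0>5h-11$ is tuned precisely so that Lemma \ref{goodplanes} produces enough good planes through every relevant secant to carry the chaining argument through, while the single possibly bad-only $(q_0+1)$-secant allowed by Lemma \ref{eenslecht} is excluded from $\L$ at the outset and so never enters the argument.
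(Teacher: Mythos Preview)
Your lower-bound argument is correct and in fact cleaner than the paper's route: counting the $s\geq q_0^{h-1}-4q_0^{h-2}$ sublines $\B(\ell_i)$ and comparing with $(q_0^{d+1}-1)/(q_0-1)$ forces $d\geq h$ immediately. The paper instead extracts $\dim\langle\L\rangle\geq h$ as a by-product of an inductive construction, so your shortcut is a genuine simplification there.

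The upper bound, however, has a real gap. Your step ``chaining the resulting projected lines forces $\dim\langle\hat\ell_1,\ldots,\hat\ell_s\rangle\leq h-1$'' is not an argument: knowing that through every $\hat\ell_j$ there pass many projected good-plane lines gives connectivity information, but connectivity does not bound the dimension of a span from above. Something global---namely the size bound $|B|\leq q_0^h+q_0^{h-1}+q_0^{h-2}+3q_0^{h-3}$---must enter, and you never invoke it in the upper-bound half. The paper's proof does exactly this missing work: it inductively builds, for each $2\leq k\leq h$, a $k$-space through $x$ containing at least $q_0^{k-1}-(5k-11)q_0^{k-2}$ lines of $\L$ (this induction, not Lemma~\ref{goodplanes}, is where the hypothesis $q_0>5h-11$ is actually consumed), and then shows via a count against $|B|$ that no $\ell_i$ can lie outside the resulting $h$-space $\xi$.

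There is a second gap: your claim $\langle\L\rangle\cap S_P=\{x\}$ is wrongly justified. A point of $\langle\L\rangle\cap S_P$ other than $x$ need not lie on any individual $\ell_i$; it lies in the span, which is much larger than the union of the $\ell_i$. (For instance, two lines through $x$, each meeting $S_P$ only in $x$, can span a plane that meets $S_P$ in a full line.) So even granting $\dim\langle\hat\ell_i\rangle\leq h-1$, your projection identity would not yield $\dim\langle\L\rangle\leq h$ without further work. The paper sidesteps this issue entirely by never projecting from $S_P$; it works directly in $\PG(h(n+1)-1,q_0)$.
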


\begin{proof}
From Lemma \ref{lemma1} and Lemma \ref{eenslecht} we get that $s$ is at least $q_0^{h-1}-4q_0^{h-2}+1-1=q_0^{h-1}-4q_0^{h-2}$. From Lemmas \ref{eerste}(ii) and \ref{goodplanes}, we get that through every line $L_i$, $i=1,\ldots,s$, there are at least $q_0^{h-2}-4q_0^{h-3}+1$ good planes, say $\Pi_{ij}$, $j=1,\ldots,t$, such that $B\cap \Pi_{ij}=\B(\pi_{ij})$, for a plane $\pi_{ij}$ through $\ell_i$. Denote the set of planes $\{\pi_{ij}, 1\leq i\leq s, 1\leq j \leq t\}$ by $\mathcal{V}$, and the set of lines $\{\ell_1,\ldots,\ell_s\}$ by $\mathcal{L}$.

A fixed plane $\pi_{ij}$ of $\mathcal{V}$, say $\pi_{11}$, contains $q_0+1$ lines of $\mathcal{L}$, say $\ell_1,\ldots,\ell_{q_0+1}$. The lines $\ell_1,\ldots,\ell_{q_0+1}$ lie on a set of at least $(q_0+1)(q_0^{h-2}-4q_0^{h-3})+1$ different planes of $\mathcal{V}$. On these planes, there lies a set $\mathcal{P}$ of at least $(q_0+1)(q_0^{h-2}-4q_0^{h-3})q_0^2$ different points $y_1,\ldots,y_u$, not in $\pi_{11}$, such that $\B(y_i)\subset B$.

We claim that $\B(y_r)=\B(y_r')$ implies that $y_r=y_r'$ for $y_r$ and $y_r'$ in $\mathcal{P}$ ($\ast$). We know that $y_r$ lies on $\pi_{ij}$ and $y_r'$ lies on $\pi_{i'j'}$ for some $i,i',j,j'$. Since $\B(\pi_{ij})=B\cap \Pi_{ij}$ and $\B(\pi_{i'j'})=B\cap \Pi_{i'j'}$, the lines $\langle \B(xy_r)\rangle$ and $\langle \B(xy_r')\rangle$ are $(q_0+1)$-secants to $B$. Since we assume that $\B(y_r)=\B(y_r')$, these $(q_0+1)$-secants coincide. Moreover, $\B(xy_r)\subset B$ and $\B(xy_r')\subset B$, so $xy_r$ and $xy_r'$ are transversal lines through the same regulus, which forces $y_r=y_r'$. This proves our claim, hence, different points of the point set $\mathcal{P}$ give rise to different points of $B$.

We will prove that, for all $2\leq k\leq h$ there exists a $k$-space through $x$ with at least $q_0^{k-1}-(5k-11)q_0^{k-2}$ lines of $\L$. The existence of $\pi_{11}$ proves this statement for $k=2$. Assume, by induction, that there exists a $j$-space through $x$, say $\nu$, where $j<h-1$, containing at least
 $q_0^{j-1}-(5j-11)q_0^{j-2}$ lines of $\L$.

We will now count the number of couples ($\ell\in \L$ contained in $\nu$, $r$ a point, not in $\nu$ with $\langle r, \ell\rangle\in \mathcal{V}$).
The number of lines of $\mathcal{L}$ in $\nu$ is at least $q_0^{j-1}-(5j-11)q_0^{j-2}$, the number of points $r\notin \nu$ with $\langle r, \ell\rangle\in \mathcal{V}$ for some fixed $\ell$, 
is at least $(q_0^{h-2}-4q_0^{h-3})q_0^2-(q_0^{j+1}-1)/(q_0-1)$. The number of points $r$ with $\langle r, \ell\rangle\in \mathcal{V}$, is by ($\ast$) at most $|B|$, hence, the number of points $r\notin \nu$ with $\langle r, \ell\rangle\in \mathcal{V}$ is at most $|B|-(q_0^{j-1}-(5j-11)q_0^{j-2})q_0-1$.

 Hence, there is a point $r$, lying on (say) $X$ different planes $\langle r,\ell\rangle$ of $\mathcal{V}$ with
$$X\geq \frac{(q_0^{j-1}-(5j-11)q_0^{j-2})(q_0^{h}-4q_0^{h-1}-(q_0^{j+1}-1)/(q_0-1))}{q_0^h+q_0^{h-1}+q_0^{h-2}+3q_0^{h-3}-q_0^{j}+(5j-11)q_0^{j-1}-1}.$$ This last expression is larger than $q_0^{j-1}-(5(j+1)-11)q_0^{j-2}$, if $h>3$, for all $j\leq h-1$.

This implies that the $j+1$-space $\langle r,\nu\rangle$, contains at least $(q_0^{j-1}-(5(j+1)-11)q_0^{j-2})q_0+1$ lines of $\mathcal{L}$, hence, by induction, we find an $h$-dimensional-space through $x$ containing at least $q_0^{h-1}-(5h-11)q_0^{h-2}$ lines of $\mathcal{L}$.

Suppose now that there is a line of $\mathcal{L}$, say $\ell_s$, not in this $h$-space $\xi$. By Lemma \ref{goodplanes}, there are at least $q_0^{h-2}-4q_0^{h-3}$ planes through $\ell_s$, giving rise to $(q_0^{h-2}-4q_0^{h-3})(q_0^2-q_0)$ points $z$, which are not contained in $\xi$, such that $\B(z)\subset B$. By $(\ast)$, and the fact that there are at least  $(q_0^{h-1}-(5h-11)q_0^{h-2})q_0+1$ points $y$ in $\xi$ such that $\B(y)\subset B$, we get that $\vert B \vert \geq q_0^h+q_0^{h-1}+q_0^{h-2}+3q_0^{h-3}$, a contradiction.

This shows that the dimension of $\langle \mathcal{L}\rangle$ is $h$.
 \end{proof}

We now use the following theorem, which is an extension of \cite[Remark 3.3]{TS:97}.
\begin{theorem}{\rm \cite[Corollary 1]{code}}  \label{unique} A blocking set of size smaller than $2q$ in $\PG(n,q)$ is uniquely reducible to a minimal blocking set.
\end{theorem}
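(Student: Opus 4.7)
The plan is to prove uniqueness via the set of essential points. Call a point $p\in B$ \emph{essential} if some hyperplane $H_p$ satisfies $H_p\cap B=\{p\}$; equivalently, $B\setminus\{p\}$ is not a blocking set. Write $E$ for the essential points of $B$. The first observation is that $E\subseteq B'$ for every minimal blocking subset $B'\subseteq B$: the tangent $H_p$ of an essential point $p$ must meet any blocking $B'\subseteq B$, and $H_p\cap B'\subseteq H_p\cap B=\{p\}$ forces $p\in B'$. Consequently, if $E$ is itself a blocking set, then $E$ is automatically the unique minimal reduction, since any minimal $B'$ then satisfies $E\subseteq B'$ together with $E$ already blocking, and minimality forces $B'=E$.

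The task thus reduces to showing that $E$ blocks whenever $|B|<2q$. I would argue by contrapositive: assume $B$ has two distinct minimal blocking subsets $B'\neq B''$ (equivalent, by the previous step, to $E$ failing to block) and try to derive $|B|\geq 2q$. Writing $X=B'\setminus B''$ and $Y=B''\setminus B'$, the tangent argument shows both sets are nonempty: for any $p\in X$ the tangent $H_p$ to $B'$ at $p$ must meet $B''$, and since $H_p\cap B'=\{p\}$ and $p\notin B''$, this intersection lies in $Y$. The target estimate is $|B|\geq|B'|+|Y|\geq(q+1)+q=2q+1$, using the classical bound $|B'|\geq q+1$ for any blocking set in $\PG(n,q)$ together with the nontrivial inequality $|Y|\geq q$.

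The main obstacle is proving $|Y|\geq q$. The model case $B'=L$, $B''=L'$ of two distinct lines in $\PG(2,q)$ meeting at a single point already exhibits $|Y|=q$ exactly, so the bound is sharp and suggests that the underlying mechanism is essentially planar. I would fix $p\in X$ and study the bundle of tangent hyperplanes of $B'$ at $p$; each such tangent must contain a point of $Y$. By analysing how these tangents are incident with $Y$, one expects to locate a line through $p$ whose $q$ points other than $p$ all lie in $Y$, producing the required $q$ points. Making this rigorous likely requires either projection from a suitable center, as in Lemma~\ref{projection}, to reduce the claim to the planar case $\PG(2,q)$ where classical Blokhuis/Bruen-type bounds apply, or an exchange-style argument that propagates a one-point disagreement between $B'$ and $B''$ into a full line of disagreement.
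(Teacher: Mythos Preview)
The paper does not supply a proof of this statement; it is quoted from \cite{code} (which extends the planar case \cite[Remark~3.3]{TS:97}) and used as a black box in the proof of the Main Theorem. So there is no in-paper argument to compare against, and I assess your proposal on its own.

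Your reduction is the standard one and is correct: the essential set $E$ lies in every minimal blocking subset, and uniqueness is equivalent to $E$ itself being a blocking set. The genuine gap is precisely where you flag it, and the mechanism you propose for closing it does not work. You ``expect to locate a line through $p$ whose $q$ points other than $p$ all lie in $Y$'', but already in your own model case $B'=L$, $B''=L'$ no such line exists: $Y=L'\setminus\{L\cap L'\}$ lies on $L'$, and the only line of $\PG(2,q)$ containing $q$ of its points is $L'$ itself, which does not pass through $p\in L\setminus L'$. More generally, in $\PG(2,q)$ a line $M$ through a point $p\notin B''$ with $M\setminus\{p\}\subseteq B''$ would force $|B''|\ge 2q$ (the $q$ lines through $p$ other than $M$ each need a point of $B''\setminus M$, and they are pairwise disjoint off $p$); so whenever $B''\subseteq B$ with $|B|<2q$ such a line simply cannot exist, and your heuristic cannot be the source of the bound.

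What actually happens in the two-lines example is that the $q$ tangents to $B'$ at $p$ meet $Y$ in $q$ \emph{distinct} points, not that a single line is full of $Y$. Turning that observation into a proof for arbitrary $B',B''$ is exactly the missing work, since a point of a general minimal blocking set may have few tangent hyperplanes and different tangents may hit the same point of $Y$. The arguments in \cite{TS:97} and \cite{code} bypass the estimate $|Y|\ge q$ altogether and establish directly that $E$ blocks (the latter inside the point--hyperplane incidence code); if you want a self-contained combinatorial proof, that is the cleaner target.
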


\begin{mtheorem}A small minimal blocking set $B$ in $\PG(n,q)$, with exponent $e$, $q=p^t$, $p$ prime, $q_0:=p^e$, $q_0\geq 7$, $t/e=h$, spanning an $(h-1)$-dimensional space is an $\F_{q_0}$-linear set.
\end{mtheorem}
\begin{proof}
As seen in Lemma \ref{span}, there exists an $h$-dimensional space $\xi$ in $\PG((n+1)h-1,q_0)$, such that $\vert \B(\xi)\cap B\vert \geq q_0^{h}-4q_0^{h-1}+1$. Define $\tilde{B}$ to be the union of $\B(\xi)$ and $B$ and recall that $\B(\xi)$ is a small minimal $\F_{q_0}$-linear blocking set in $\PG(n,q)$. Clearly, $\tilde{B}$ is a blocking set, and its size is equal to $|B|+|\B(\xi)|-|B\cap B(\xi)|$. Hence, $|\tilde{B}|$ is at most $(q_0^{h+1}-1)/(q_0-1)+q_0^{h}+q_0^{h-1}+q_0^{h-2}+3q_0^{h-3}-(q_0^h-4q_0^{h-1}+1)<2q_0^h$. Theorem \ref{unique} shows that $B=\B(\xi)$, so we may conclude that $B$ is an $\F_{q_0}$-linear set.
\end{proof}

By the fact that the exponent of a small minimal blocking set in $\PG(n,q)$ is at least one (see Theorem \ref{1modp}), we get the following corollary.
\begin{corollary}
All small minimal blocking sets in $\PG(n,p^t)$, $p$ prime, $p>5t-11$ spanning a $(t-1)$-space, are $\F_p$-linear.
\end{corollary}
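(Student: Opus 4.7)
The plan is to deploy Lemma \ref{span} as the engine and close with the unique-reducibility result, Theorem \ref{unique}. Lemma \ref{span} furnishes, inside the ambient space $\PG(h(n+1)-1,q_0)$ of a Desarguesian $(h-1)$-spread, an $h$-dimensional subspace $\xi$ spanned by lines $\ell_1,\ldots,\ell_s$ through a common point $x$, where $x$ is the spread element corresponding to a chosen point $P\in B$ lying on a $(q_0+1)$-secant. The target equality is $B=\B(\xi)$; linearity of $B$ follows from it immediately.

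First I would record that $\B(\xi)$, as the linear set associated to an $h$-space in the ambient space of a Desarguesian $(h-1)$-spread, is itself a small minimal $\F_{q_0}$-linear blocking set of $\PG(n,q)$ with $|\B(\xi)|\leq (q_0^{h+1}-1)/(q_0-1)$. Second, I would lower-bound the overlap using the construction: each $\ell_i\subset\xi$ satisfies $\B(\ell_i)=L_i\cap B$ where $L_i$ is a $(q_0+1)$-secant through $P$, and since the $s\geq q_0^{h-1}-4q_0^{h-2}$ lines $\ell_i$ pairwise intersect only at $x$, counting their points gives
\[
|B\cap\B(\xi)|\geq 1+q_0 s\geq q_0^h-4q_0^{h-1}+1.
\]

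Third, I would form $\tilde{B}:=B\cup\B(\xi)$. This is a blocking set because $B$ already is, and combining Lemma \ref{size} for $|B|$ with the bound on $|\B(\xi)|$ and the overlap estimate yields, after a short arithmetic simplification,
\[
|\tilde{B}|=|B|+|\B(\xi)|-|B\cap\B(\xi)|<2q_0^h=2q,
\]
where the strict inequality is exactly where the hypothesis $q_0\geq 7$ is consumed. Theorem \ref{unique} then guarantees that $\tilde{B}$ contains a unique minimal blocking set; since $B$ and $\B(\xi)$ are both minimal blocking sets sitting inside $\tilde{B}$, they must coincide, so $B=\B(\xi)$ is $\F_{q_0}$-linear.

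The main obstacle is Lemma \ref{span} itself, which is where the bulk of the geometric work lives: distinguishing good and bad planes through each secant, controlling point-exponents of successive projections, and inductively enlarging the span. It is also what forces the characteristic assumption $q_0>5h-11$. Once Lemma \ref{span} is available, the final closure is essentially the short size count above, though one has to be attentive because the margin $|\tilde{B}|<2q$ is genuinely tight around $q_0=7$; the small-dimension cases $h\leq 3$ are not directly handled by the method but are already subsumed by the earlier linearity results listed in the introduction.
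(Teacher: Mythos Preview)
Your proposal is correct and follows exactly the paper's approach: the paper proves the Main Theorem via Lemma~\ref{span} plus the size count and Theorem~\ref{unique} precisely as you outline, and then derives the Corollary in one line by noting that the exponent is at least $1$ (Theorem~\ref{1modp}), so that one may take $q_0=p$ and $h=t$. Your write-up essentially reproduces the Main Theorem's proof rather than merely citing it, but the argument and the numerical bounds you give, namely $|B\cap\B(\xi)|\geq q_0^{h}-4q_0^{h-1}+1$ and $|\tilde B|<2q_0^{h}$, are identical to the paper's.
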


{\bf Acknowledgment:} This research was conducted while the second author was
visiting the Department of Computer Science at E\"otv\"os Lor\'and University
in Budapest. The author thanks all members of the Finite Geometry group for their hospitality during her stay.

The first author acknowledges the partial support of the grants OTKA T-49662,
K-81310, T-67867, CNK-77780, ERC, Bolyai and T\'AMOP.

The authors thank the anonymous referees for their valuable comments.


\begin{thebibliography}{99}
\bibitem{simeon} S. Ball. The number of directions determined by a function over a finite field. {\em J. Combin. Theory Ser. A} {\bf 104 (2)} (2003), 341--350.
\bibitem{blok}  A. Blokhuis.  On the size of a blocking set in $\PG(2,p)$. {\em Combinatorica} {\bf 14 (1)} (1994), 111--114. 
\bibitem{redei2}  A. Blokhuis, S. Ball, A.E. Brouwer, L. Storme, and T. Sz\H{o}nyi. On the number of slopes of the graph of a function defined on a finite field.  {\em J. Combin. Theory Ser. A} {\bf 86 (1)}  (1999), 187--196.
\bibitem{aart} A. Blokhuis, L. Lov\'asz, L. Storme, T. Sz\H{o}nyi. On multiple
  blocking sets in Galois planes. {\em Adv. Geom.} {\bf 7 (1)} (2007), 39--53.
\bibitem{heim} U. Heim. Proper blocking sets in projective spaces. {\em Discrete Math.} {\bf 174 (1--3)} (1997), 167--176. 
\bibitem{code} M. Lavrauw, L. Storme, and G. Van de Voorde. On the code generated by the incidence matrix of points and $k$-spaces in $\PG(n,q)$ and its dual. {\em Finite Fields Appl. }  {\bf 14 (4)} (2008), 1020--1038.

\bibitem{linearsets} M. Lavrauw and G. Van de Voorde. On linear sets on a projective line. {\em Des. Codes Cryptogr.} {\bf 56 (2--3)} (2010), 89--104.

\bibitem{pol} {O. Polverino}. Small blocking sets in $\PG(2,p^3)$. {\em Des. Codes Cryptogr.} {\bf 20 (3)} (2000), 319--324.
\bibitem{redei} {L. Storme and P. Sziklai}. Linear pointsets and R\'edei type $k$-blocking sets in $\PG(n, q)$. {\em J. Algebraic Combin.} {\bf 14 (3)} (2001), 221--228. 
\bibitem{Storme-Weiner}{ L. Storme and Zs. Weiner.} On $1$-blocking sets in ${\rm PG}(n,q)$, $n\geq 3$. {\em Des. Codes Cryptogr.} {\bf 21 (1--3)} (2000), 235--251. 
\bibitem{TS:97} {T. Sz\H onyi}. Blocking sets in desarguesian affine and
projective planes. \emph{Finite Fields Appl.} {\bf 3 (3)} (1997), 187--202.
\bibitem{sz}  T. Sz\H{o}nyi and Zs. Weiner. Small blocking sets in higher dimensions.  {\em J. Combin. Theory, Ser. A}  {\bf 95 (1)}  (2001), 88--101.
\bibitem{sziklai} P. Sziklai. On small blocking sets and their linearity. {\em J. Combin. Theory, Ser. A}  {\bf 115 (7)}  (2008), 1167--1182.
\bibitem{reduction} G. Van de Voorde. On the linearity of higher-dimensional blocking sets, {\em Electron. J. Combin.} {\bf 17 (1)} (2010), Research Paper 174, 16 pp. 
\end{thebibliography}
\end{document}